\documentclass[submission,copyright,creativecommons]{eptcs}
\providecommand{\event}{AiML 2026} 

\usepackage{aiml26}

\usepackage{iftex}
\usepackage{newtxtext,newtxmath}

\usepackage{enumitem}

\ifpdf
  \usepackage{underscore}         
  \usepackage[T1]{fontenc}        
\else
  \usepackage{breakurl}           
\fi

\usepackage{amsmath}

\usepackage{stmaryrd}
\usepackage{tikz-cd}
  \usetikzlibrary{arrows,arrows.meta}
  \tikzcdset{arrow style=tikz, diagrams={>=stealth'}}

\usepackage{multicol}
\usepackage{proof}
\usepackage{booktabs}
\usepackage{wrapfig}

\newcommand{\mc}[1]{\mathcal{#1}}

\newcommand{\mf}[1]{\mathfrak{#1}}

\newcommand{\mo}{\mathfrak}

\newcommand{\lna}[1]{\ensuremath{\mathsf{#1}}}

\newcommand{\flatAx}{\mathrm{Ax^{\flat}}}

\newcommand{\Th}{\mathrm{Th}}
\newcommand{\sneg}{\mathord{\sim}}

\newcommand{\fleq}{\preceq}    
\newcommand{\fgeq}{\succeq}    

\newcommand{\Vdashs}{\Vdash^s}  

\DeclareSymbolFont{symbolsC}{U}{txsyc}{m}{n}
\DeclareMathSymbol{\sto}{\mathrel}{symbolsC}{74}

\renewcommand{\iff}{\quad\text{iff}\quad}
\renewcommand{\phi}{\varphi}
\DeclareMathOperator{\Prop}{Prop}

\newcommand{\Ax}{\mathrm{Ax}}

\newcommand{\llb}{\llbracket}
\newcommand{\rrb}{\rrbracket}
\newcommand{\seq}{\Rightarrow}

\newcommand{\hlcf}{\lna{HLC^{\flat}}}
\newcommand{\hlcs}{\lna{HLC^{\sharp}}}

\DeclareMathOperator{\up}{up}
\newcommand{\Lsto}{\mathcal{L}_{\sto}}

\newcommand{\SEG}{\mathrm{SEG}}
\newcommand{\subsetsim}{\mathrel{\substack{\textstyle\subset\\[-0.2ex]\textstyle\sim}}}

\newcommand{\axKa}{\mathsf{k_a}}
\newcommand{\axTr}{\mathsf{tr}}
\newcommand{\axDi}{\mathsf{di}}
\newcommand{\axEm}{\mathsf{em}}

\newcommand{\axTb}{\mathsf{t_{\Box}}}
\newcommand{\strength}{\mathsf{str}}
\newcommand{\foura}{\mathsf{4_a}}

\newcommand{\pa}{\mathsf{p_a}}

\newcommand{\tto}{\sto}

\newcommand{\myitem}[1]{
  \renewcommand{\labelenumi}{(\theenumi) }
  \renewcommand{\theenumi}{#1}
  \item
}

\newcommand{\undto}{\mathrel{\mkern1mu\underline{\mkern-1mu \to\mkern-2mu}\mkern2mu }}

\newcommand{\undstof}{\mathrel{\mkern1mu\underline{\mkern-1mu \sto\mkern-1mu}\mkern1mu }^\flat}

\newcommand{\duf}[1]{\mf{#1}^{+\flat}}

\title{Relational Semantics for Flat Heyting-Lewis Logic}
\author{%
  Jim de Groot\footnote{The first author was supported by Swiss National Science Foundation (SNSF) grant No.~200021\_215157.}
  \institute{University of Bern\\ Bern, Switzerland}
  \email{jim.degroot@unibe.ch}
\and
  Tadeusz Litak\footnote{The second author would like to acknowledge the support of  PNRR MUR projects PE0000013-FAIR and F53C25001420001-GAMEL.}
  \institute{University of Naples Federico II\\ Naples, Italy}
  \email{tadeusz.litak@fau.de} \email{tadeusz.litak@unina.it}
}

\newcommand{\titlerunning}{Relational Semantics for $\hlcf$}
\newcommand{\authorrunning}{J.~de Groot \& T.~Litak}

\hypersetup{
  bookmarksnumbered,
  pdftitle    = {\titlerunning},
  pdfauthor   = {\authorrunning},
  pdfsubject  = {EPTCS},               
  pdfkeywords = {keyword1, keyword2} 
}

\newcommand{\deq}{:=}
\newcommand{\la}{\langle}
\newcommand{\ra}{\rangle}

\newcommand{\stex}[2]{#2^{\lceil #1 \rceil}}

\newcommand{\gA}{\mathcal{A}}

\newcommand{\opr}{\Box}

\newcommand{\ipc}{\lna{IPC}}

\newcommand{\iam}{\hlcf}

\newcommand{\hyph}{\mbox{-}}

\newcommand{\hae}{\ensuremath{\textsc{hae}}}
\newcommand{\hao}{\ensuremath{\textsc{hao}}}

\newcommand{\hlalg}{\lna{L}\hyph\hae}
\newcommand{\dhlalg}{\lna{L}\hyph\hao}
\newcommand{\thaes}{\ensuremath{\hlalg\textup{s}}}

\newcommand{\takeout}[1]{}

\begin{document}
\maketitle

\begin{abstract}
  We introduce relational semantics for
  ``flat Heyting-Lewis logic'' $\mathsf{HLC}^{\flat}$.
  This logic arises as the extension of intuitionistic logic
  with a Lewis-style strict implication modality that,
  contrary to its ``sharp'' counterpart $\mathsf{HLC}^{\sharp}$,
  does not turn meets into joins in its first argument.
  We prove completeness and the finite model property for $\mathsf{HLC}^{\flat}$
  and for several extensions with additional axioms.
\end{abstract}

\section{Introduction}

  Recent years have seen a revival of the interest in intuitionistic modal logics~\cite{proietti2012,litak14:trends,stepaiale15,artemovp16:rsl,litakpr17,BalbianiBDF20,rog20b,Kavvos20,DasM23,ShillitoGGI23,GirlandoEA23,AlmeidaB24,BalbianiEA24,DasEA24,groshiclo25,BozzelliCCM25,Mojtahedi26,BalbianiG26},
  including extensions with a Lewisian strict implication $\tto$~\cite{LitVis18,litvis24}
  and various types of conditional implications~\cite{wei19,DalGir26}.
Recall that in the intuitionistic setting, $\tto$ is not definable in terms of unary $\Box$. Instead, it can be viewed as sitting between $\Box(p \to q)$ and $\Box p \to \Box q$.
Indeed, the basic ``flat'' system\footnote{Naming underwent several evolutions. Early references in the Utrecht school \cite{iemh:prov01,iemh:pres03,Zhou03,IemhoffJZ05:igpl} denoted the base ``flat'' system as \lna{iP^-} and the base sharp system as \lna{iP}, Litak and Visser \cite{LitVis18} replaced \lna{iP} with \lna{iA} (with $\lna{P}$ standing for \emph{preservativity} and $\lna{A}$ standing for \emph{arrows}), and in a subsequent paper the same authors \cite{litvis24} finally settled for the present notation.} $\hlcf$ proves~\cite{LitVis18}:
  \begin{enumerate}\itemindent=1.1em
	\myitem{$\lna{bl}$} $\Box(p \to q) \to (p \tto q)$
	\myitem{$\lna{lb}$}\label{ax:lb} $(p \tto q) \to (\Box p \to \Box q)$.
  \end{enumerate}
The original motivation of the Utrecht school to study such a connective came from research on schematic logics of theories over intuitionistic arithmetic \lna{HA}. More specifically, it arose from the study of {\emph{$\Sigma^0_1$-preservativity}} \cite{viss:eval85,viss:prop94,viss:subs02,iemh:pres03,IemhoffJZ05:igpl}, which over \lna{PA} can be seen as the contraposed variant of both \emph{$\Pi^0_1$-conservativity} and \emph{arithmetical interpretability}   \cite{bera:inte90,shav:rela88,japa:logi98,viss:over98,arte:prov04}.
Subsequently, many other application and interpretations were put forward, see e.g.~\cite{LitVis18,grolitpat26-arxiv}.
In particular, in the presence of an additional axiom $\strength$ (see Section \ref{sec:extensions}) the resulting calculus turns out to be the Curry-Howard counterpart (i.e.~the inhabitation logic) of  \emph{(Hughes) arrows} in functional programming, in particular in Haskell~\cite{Hug00,Hug04,atkey08:msfp,jacobshh09:jfp, lindleywy08:msfp,lindleywy10:jfp,lindley14:wgp}. Somewhat underdeveloped philosophical applications include a generalisation of intuitionistic epistemic logic \lna{IEL}~\cite{artemovp16:rsl} to the \emph{intuitionistic logic of entailments} \lna{IELE}~\cite[Section~2.4]{grolitpat26-arxiv} or a fine-grained analysis of the collapse of Lewis' original 1918 system of strict implication caused by involutive negation~\cite[Appendix~D]{LitVis18}.\footnote{It is worth noting here that in later years, having become aware of nascent study of non-classical calculi, Lewis not only followed closely the development of early multi-valued logics, but also on at least one occasion spoke favourably of Brouwer's rejection of excluded middle. More information and detailed discussion can be found in Litak and Visser \cite{LitVis18}.}

  The flat calculus~$\hlcf$ arises from extending intuitionistic logic with
  a binary operator $\sto$ that is normal in it second argument, transitive,
  and satisfies \emph{implication necessitation}, i.e.~derivability of $\phi \to \psi$
  implies derivability of $\phi \sto \psi$.
  From this, we can obtain the sharp calculus $\hlcs$ by adding the axiom:
  \begin{enumerate}\itemindent=1.1em
    \myitem{\lna{di}}\label{ax:di} $(p \sto r) \wedge (q \sto r) \to ((p \vee q) \sto r)$
  \end{enumerate}
  This sharp version of the logic can conveniently be interpreted in
  Kripke-style relational semantics. This perspective has resulted in numerous
  correspondence, completeness and finite model property results for this ``sharp'' semantics \cite{iemh:prov01,iemh:pres03,Zhou03,IemhoffJZ05:igpl,LitVis18},
  with recent work showing how to use the natural G{\"o}del-McKinsey-Tarski
  translation to transfer metatheory of bimodal classical logics~\cite{grolitpat26-arxiv}.

Nevertheless, from the point of view of applications and interpretations discussed above, there are few reasons to insists on the ``sharp'' calculus $\hlcs$ as the minimal one, excluding interpretations not validating \lna{di}. In more philosophical contexts such as \lna{IELE}~\cite[Section~2.4]{grolitpat26-arxiv}, the present state of the research does not determine the status of \lna{di}, whereas in mathematical and computer science applications enforcing it as a base axiom would simply be too restrictive.
Insisting on the validity of \lna{di} in the inhabitation logic of arrows in functional programming languages~\cite{Hug00,Hug04,atkey08:msfp,jacobshh09:jfp, lindleywy08:msfp,lindleywy10:jfp,lindley14:wgp} would limit the Curry-Howard interpretation to so-called \emph{arrows with choice} \cite{Hug04}. While this would still cover \emph{arrows with delay} and  \emph{higher-order arrows} (corresponding, respectively, to \emph{applicative functors} and monads), and some other instances such as list processors and Kleisli arrows, important examples of arrows \emph{without} choice can be obtained using automata or functions on infinite streams \cite[\S\ 2.3]{grolitpat26-arxiv}. 

The situation is similar when it comes to schematic logics of arithmetical theories, i.e.~the original motivation of the Utrecht school to study variants and extensions of  \lna{HLC}: Sharpness obtains in some contexts, but not in others. For example, it fails classically, i.e.~when \lna{HA} is replaced by Peano Arithmetic (\lna{PA}). In fact, one can argue that the main reason why classical interpretability logic is a separate subfield with its own methods and semantics, irreducible even to multi-modal and multi-dimensional normal modal logics, is precisely the failure of the validity of the (contraposed form of) \lna{di} in the logic of $\Pi^0_1$-conservativity/interpretability of \lna{PA}. Some readers might find this failure surprising
given that \lna{di} holds in the schematic theory of \lna{HA}, which is a subtheory of \lna{PA}. However, non-monotonicity of schematic logics is a phenomenon observable even in the signature of provability with a single unary $\Box$ (\cite[\S\ 5.3]{LitVis18}). A sufficient condition for sharpness of the schematic logic of a given arithmetical theory $T$ is that $T$ is able to prove that the set of $T$-theorems is \emph{closed under q-realisability} \cite[Theorem~5.6]{LitVis18}. Note that if $T' \supseteq T$, the $T$-provable statements about $T$-theorems remain $T'$-provable, but there might be new $T'$-theorems for which q-realisability is not $T'$-provable (and, \emph{a fortiori}, not $T$-provable).

Arguably, the main temptation to accept the sharp system as the minimal one has been that of nice completeness results and natural countermodels. In the flat setting, so far one has had to turn to algebraic semantics
  or a suitable adaptation of
  Chellas-Weiss semantics for $\lna{ICK}$ \cite{wei19,cialiu19,dufgro25},
  Routley-Meyer semantics for substructural logics~\cite{roumey72a,roumey72b,roumey73,restall00,bimdunfer18},
  or (generalised) Veltman semantics \cite{dejo:prov90,verb:unpu92,dejo:comp99,joosten2020},
  because a simple Kripke-style semantics for $\hlcf$ appeared elusive. 

  In this paper we fill this gap by providing a Kripkean interpretation for $\hlcf$.
  This semantics is inspired by recent work on semantics of \lna{CK}~\cite{groshiclo25},
  and crucially relies on using a \emph{preorder} $\fleq$
  instead of a partial order to interpret the intuitionistic implication.
  Since the semantic clause for $\tto$ directly enforces upward persistence
  (Definition~\ref{def:cfrm}), the most general version of the new semantics
  (Definition~\ref{def:cfrm}) does not impose any interaction conditions between
  $R$ and $\fleq$.
  However, similarly to the case of intuitionistic $\Box$ and unlike the sharp interpretation,
  our language is oblivious to closing $R$ under \emph{post-composing} with $\fleq$
  (Proposition \ref{prop:upsame}), and the resulting  \emph{upward-flat} frames (Section \ref{sec:upflat}) prove convenient for computing correspondents and obtaining completeness results.
  
  Using a canonical model construction, we prove completeness and the finite
  model property for $\hlcf$ and several of its extensions
  (Sections~\ref{sec:canonical} and~\ref{sec:reuse}).
  Guided by the canonical model construction for $\lna{CK}$, we use
  \emph{segments} rather than prime theories to have a more fine-grained
  handle on the modal accessibility relation.
  Still mirroring $\lna{CK}$, we sometimes need to restrict our choice of
  segments, for example when proving completeness for natural variants
  of \lna{K4} and \lna{S4} in our setting (Section \ref{subsec:alt-can-mod}). 

When $\preceq$ is collapsed to equality, turning our frames into standard Kripke frames, our frames turn~\lna{lb} into a bi-implication, rather than \lna{bl}. 
This does not mean that our semantics trivialises classically: in the preorder setting, validating excluded middle simply requires $\fleq$ to be symmetric, and such a classical variant of our semantics does not collapse $\tto$ (Example~\ref{ex:classical}). This creates the opportunity to use our semantics for completeness results for subsystems of standard interpretability logics such as \lna{ILM} and~\lna{ILP}.

In the \lna{CK} setting, the segment approach can be used to obtain duality results~\cite{groshiclo26}. While our paper does not discuss duality in depth, we include comments for the interested reader, such as Remarks~\ref{rem:dualityhard} and~\ref{rem:nogmt}, illustrating difficulties with more standard approaches. 
However, we discuss a promising application in Section \ref{sec:exsta} in the context of syntactically motivated notion of \emph{extension stability}. We note the relationship of this notion to what one might call the \emph{open subframe} construction, and use our semantics to show that $\hlcs$ is not extension stable, unlike the flat base calculus.

\section{Intuitionistic strict implication, sharply and flatly} \label{sec:base}

  This section provides preliminaries and recapitulates known material.
  Section~\ref{sec:baseflat} presents the base flat system \hlcf.
  Section~\ref{sec:sharpening} discusses the sharp variant \hlcs\ together with
  its known Kripke semantics. Section~\ref{sec:alg} recapitulates the algebraic
  semantics of both systems. Throughout the paper, we denote by $\Lsto$ the
  language generated by the grammar
  \begin{equation*}
    \phi ::= p \mid \top \mid \bot \mid \phi \wedge \phi \mid \phi \vee \phi \mid \phi \to \phi \mid \phi \sto \phi,
  \end{equation*}
  where $p$ ranges over some arbitrary but fixed set $\Prop$ of proposition letters.
  We abbreviate $\Box\phi := \top \sto \phi$.

\subsection{Syntax and axioms of the base flat system}\label{sec:baseflat}

  A \emph{consecution} is an expressions of the form $\Gamma \seq \phi$,
  where $\Gamma \cup \{ \phi \} \subseteq \Lsto$.

\begin{definition}
  Let $\flatAx$ be an axiomatisation of intuitionistic logic
  together with the axioms
  \begin{multicols}{2}%
  \begin{enumerate}\itemindent=1.1em
	\myitem{$\axKa$} $((p \sto q) \land (p \sto r)) \to (p \sto (q \land r))$
	\myitem{$\axTr$} $((p \sto q) \wedge (q \sto r)) \to (p \sto r)$
  \end{enumerate}
  \end{multicols}
  \noindent
  If $\Ax \subseteq \Lsto$, the we denote by $\mathcal{I}(\Ax)$ the
  collection of substitution instances of formulas in $\Ax$,
  and define the axiomatic system $\hlcf \oplus \Ax$ by:
  \begin{equation*}
    \mathsf{(Ax)} \;
      \dfrac{\phi\in\mathcal{I}(\flatAx)\cup \mathcal I (\Ax)}{\Gamma \seq \phi} \qquad
    \mathsf{(El)} \;
      \dfrac{\phi \in \Gamma}{\Gamma \seq \phi} \qquad
    \mathsf{(MP)} \; 
      \dfrac{\Gamma \seq \phi \qquad \Gamma \seq \phi \to \psi}{\Gamma \seq \psi} \qquad
    \mathsf{(N_a)} \;
      \dfrac{\emptyset \seq \phi \to \psi}{\Gamma \seq \phi \sto \psi}
  \end{equation*}
  We say that $\Gamma \seq \phi$ is \emph{provable in} $\hlcf \oplus \Ax$,
  and write $\Gamma \vdash_{\Ax} \phi$, if
  there exists a tree of consecutions built using the rules above with
  $\Gamma \seq \phi$ as root and adequate applications of rules
  $\mathsf{(El)}$ and $\mathsf{(Ax)}$ as leaves.
  If $\Ax = \emptyset$ then we abbreviate $\vdash_{\Ax}$ to $\vdash$,
  and if $\Ax=\{ \phi_1, \dots, \phi_n\}$ then we write
  $\hlcf \oplus \phi_1 \oplus \dots \oplus \phi_n$ for $\hlcf \oplus \Ax$.
  Finally, we write $\Gamma \vdash_{\Ax} \Delta$ if there exist $\psi_1, \ldots, \psi_m \in \Delta$ such that
  $\Gamma \vdash_{\Ax} \psi_1 \vee \cdots \vee \psi_m$.
\end{definition}

\begin{proposition}
  For $\Ax \subseteq \Lsto$ and uniform substitution $\sigma$,
  the following rules are admissible in $\hlcf \oplus \Ax$:
  \begin{equation*}
    \infer{\Gamma, \Gamma' \seq \phi}
          {\Gamma \seq \phi}
    \qquad
    \infer{\Gamma \seq \phi}
          {\{\Gamma \seq \delta \; \mid \; \delta \in \Delta \} \quad \Delta \seq \phi}
    \qquad
	\infer{\Gamma^\sigma \seq \varphi^\sigma}
	      {\Gamma \seq \varphi}
	\qquad
    \infer={\Gamma \seq \phi \rightarrow \psi}
          {\Gamma, \phi \seq \psi}
  \end{equation*}
\end{proposition}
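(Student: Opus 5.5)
We argue by induction on the height of derivation trees in $\hlcf \oplus \Ax$, one rule at a time. A useful preliminary observation is that $\mathsf{(Ax)}$ and $\mathsf{(N_a)}$ have conclusions with arbitrary left-hand side $\Gamma$. Moreover, the premise of $\mathsf{(N_a)}$ has empty context, so it is insensitive to any manipulation of $\Gamma$.

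\emph{Weakening.} Induct on the derivation of $\Gamma \seq \phi$. Leaves by $\mathsf{(Ax)}$ remain leaves with context $\Gamma,\Gamma'$. Leaves by $\mathsf{(El)}$ remain leaves because $\phi\in\Gamma\subseteq\Gamma\cup\Gamma'$. For $\mathsf{(MP)}$, apply the induction hypothesis to both premises. For $\mathsf{(N_a)}$, reuse the same premise $\emptyset\seq\phi\to\psi$ with the enlarged context.

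\emph{Cut.} Induct on the derivation of $\Delta\seq\phi$, replacing context $\Delta$ by $\Gamma$ throughout. An $\mathsf{(El)}$-leaf with $\phi=\delta\in\Delta$ is replaced by the given derivation of $\Gamma\seq\delta$. The $\mathsf{(Ax)}$ and $\mathsf{(N_a)}$ cases are immediate as above, and $\mathsf{(MP)}$ follows from the induction hypothesis. Only finitely many $\delta$ are actually used, so no issue arises with infinite $\Delta$.

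\emph{Substitution.} Induct again. Substitution instances of formulas in $\mathcal I(\flatAx)\cup\mathcal I(\Ax)$ are again in that set, since a composition of substitutions is a substitution. $\mathsf{(El)}$ is preserved because $\phi^\sigma\in\Gamma^\sigma$. $\mathsf{(MP)}$ is preserved because $(\phi\to\psi)^\sigma=\phi^\sigma\to\psi^\sigma$. $\mathsf{(N_a)}$ is preserved since the induction hypothesis on its premise gives $\emptyset\seq\phi^\sigma\to\psi^\sigma$, and $(\phi\sto\psi)^\sigma=\phi^\sigma\sto\psi^\sigma$.

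\emph{Deduction theorem.} The direction from $\Gamma\seq\phi\to\psi$ to $\Gamma,\phi\seq\psi$ follows by weakening, $\mathsf{(El)}$ for $\phi$, and $\mathsf{(MP)}$. The converse direction is the main step. We induct on the derivation of $\Gamma,\phi\seq\chi$ and show $\Gamma\seq\phi\to\chi$:
\begin{itemize}
\item If $\chi$ is an axiom instance, use the intuitionistic axiom $\chi\to(\phi\to\chi)$ and $\mathsf{(MP)}$.
\item If $\chi\in\Gamma$, proceed the same way using $\mathsf{(El)}$.
\item If $\chi=\phi$, use the intuitionistic theorem $\phi\to\phi$, derivable from the $\mathsf{K}$ and $\mathsf{S}$ axioms via $\mathsf{(MP)}$.
\item For $\mathsf{(MP)}$, use the axiom $(\phi\to(\alpha\to\chi))\to((\phi\to\alpha)\to(\phi\to\chi))$ together with the induction hypotheses.
\item For $\mathsf{(N_a)}$, the conclusion $\chi=\alpha\sto\beta$ is derivable from context $\Gamma$ alone by the same rule, and we then proceed as in the axiom case.
\end{itemize}

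The one real point is the $\mathsf{(N_a)}$ case. The deduction theorem goes through only because the necessitation rule's premise has empty context, so the rule never depends on hypotheses. Everything else is the standard Hilbert-style argument. This assumes the chosen axiomatisation of intuitionistic logic contains, or derives, $\mathsf{K}$ and $\mathsf{S}$, which any standard one does.
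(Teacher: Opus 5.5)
Your proof is correct and follows the paper's approach. The paper simply says that each rule is admissible by induction on the height of a derivation of the premiss(es), and you carry out exactly these inductions, correctly identifying that the empty context in the premise of $\mathsf{(N_a)}$ is what makes weakening, cut and the deduction theorem go through. Deriving the easy direction of the deduction theorem from weakening, $\mathsf{(El)}$ and $\mathsf{(MP)}$, rather than by a separate induction, is a harmless variation.
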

\begin{proof}
  By induction on the height of a derivation for the premiss(es).
\end{proof}

  The first three rules show that $\hlcf \oplus \Ax$ is a monotone,
  compositional and structural relation, respectively.
  The double line in the last rule indicates that it holds both ways.
  Furthermore, since proof trees are finite
  we have $\Gamma \vdash_{\Ax} \phi$ if and only if
  there is a finite $\Gamma' \subseteq \Gamma$ such that $\Gamma' \vdash_{\Ax} \phi$.
  Therefore $\hlcf \oplus \Ax$ is a finitary logic~\cite[Definition~1.4.1]{Kra99}.
Wherever possible, we blur the distinction between a logic and the set of its theorems (identified with derivable consecutions with an empty premise).

\subsection{The sharpening} \label{sec:sharpening}

\begin{definition}
  Let $\axDi$ be the axiom $(p \sto r) \wedge (q \sto r) \to ((p \vee q) \sto r)$,
  and define the \emph{sharp Heyting-Lewis calculus} by $\hlcs := \hlcf \oplus (\axDi)$.
\end{definition}

  The sharp systems is known to allow a simple Kripke-style semantics,
  with soundness, completeness and the finite model property results for many of its
  extensions \cite{iemh:prov01,iemh:pres03,Zhou03,IemhoffJZ05:igpl,LitVis18,grolitpat26-arxiv}:
  
\begin{definition}\label{def:sharp-model}
  A \emph{sharp frame} is a tuple $\mo{F} = (W, \leq, R)$ consisting of a set $W$,
  a partial order $\leq$ on $W$, and a relation $R$ on $W$ such that
  for all $w, v, u \in W$:
  \begin{equation*}
    \text{if}\quad w \leq v \quad\text{and}\quad v R u \quad\text{then}\quad w R u.
  \end{equation*}

  A \emph{sharp model} is formed by adding a valuation that interprets
  proposition letters as upsets.
  The interpretation of $\Lsto$-formulas at a world $w$ in a sharp model $\mo{M}$
  is defined recursively via:
  \begin{alignat*}{4}
    &\mo{M}, w \Vdashs p
      &&\iff w \in V(p) \\
    &\mo{M}, w \Vdashs \bot
      &&\phantom{\iff}\text{never} \\
    &\mo{M}, w \Vdashs \phi \wedge \psi
      &&\iff \mo{M}, w \Vdashs \phi \text{ and } \mo{M}, w \Vdashs \psi \\
    &\mo{M}, w \Vdashs \phi \vee \psi
      &&\iff \mo{M}, w \Vdashs \phi \text{ or } \mo{M}, w \Vdashs \psi \\
    &\mo{M}, w \Vdashs \phi \to \psi
      &&\iff \text{for all } w' \fgeq w,
            \text{ if } \mo{M}, w \Vdashs \phi \text{ then } \mo{M}, w \Vdashs \psi \\
    &\mo{M}, w \Vdashs \phi \sto \psi
      &&\iff \text{for all } v \, (
            \text{if } w R v
            \text{ and } \mo{M}, v \Vdashs \phi 
            \text{ then } \mo{M}, v \Vdashs \psi )
  \end{alignat*}
\end{definition}

\subsection{Algebraic semantics} \label{sec:alg}

  While Kripke completeness has so far only been available for the sharp
  calculus and its reasonably well-behaved extensions, algebra provides an
  obvious route towards a generic completeness result.

\begin{definition}
  A \emph{flat Lewisian Heyting Algebra Expansion}, or
 \emph{\hlalg}\footnote{We write $\thaes$ for the class of all Lewisian Heyting Algebra Expansions, following  Litak and Visser \cite{litvis24}. The same authors call the class of sharp algebras \textit{Lewisian Heyting Algebras with Operators} and discuss the reasons behind this terminology, whereas De Groot et al.~\cite{grolitpat26-arxiv} call the sharp algebras simply \emph{Heyting-Lewis algebras}, a name which would prove rather confusing in this context.}, 
  is a tuple $\gA \deq \la A, \wedge, \vee, \tto, \to, \bot, \top \ra$
  such that $\la A, \wedge, \vee, \to, \top, \bot \ra$ is a  Heyting algebra
  and the following laws are satisfied: 
  \begin{enumerate}\itemindent=1.1em
    \myitem{\lna{CK}}\label{it:alg-ck} $(a \tto b) \wedge (a \tto c) = a \tto (b \wedge c)$,
    \myitem{\lna{CT}}\label{it:alg-ct} $(a \tto b) \wedge (b \tto c) \leq a \tto c$,
    \myitem{\lna{CI}}\label{it:alg-ci} $a \tto a = \top$.
  \end{enumerate}
  If $\gA$ additionally satisfies
  \begin{enumerate}\itemindent=1.1em
    \myitem{\lna{CD}}\label{it:alg-cd} $(a \tto c) \wedge (b \tto c) = (a \vee b) \tto c$.
  \end{enumerate}
  then it is a \emph{sharp Lewisian Heyting Algebra} (\emph{\dhlalg}).
\end{definition}

  A \hlalg\ is a \dhlalg\ if and only if its \emph{strict reduct},
  i.e.~the reduct without $\to$, is a weak Heyting algebra~\cite{CelaniJ05:mlq}.
  We note that \lna{CK}, \lna{CD}, \lna{CT},  and \lna{CI} are referred to
  as \lna{C1}\,--\,\lna{C4} in~\cite{CelaniJ05:mlq}. 
  A valuation $v$ in $\gA$, as usual, maps propositional atoms to elements of $A$ and is 
  inductively extended to $\hat{v}$ defined on all formulas in the obvious way.
  We write $\gA, v \Vdash \phi$ if $\hat{v}(\phi) = \top$
  and $\gA \Vdash \phi$ if $\gA, v \Vdash \phi$ for every valuation $v$.
  For $\Ax \subseteq \Lsto$, we write $\hlalg(\Ax)$ for the class of
  $\hlalg$-algebras $\gA$ such that $\gA \Vdash \phi$ for all $\phi \in \Ax$.
  Furthermore, we write $\hlalg(\Ax) \Vdash \Gamma \seq \phi$ if
  there exists a finite $\Gamma' \subseteq \Gamma$ such that all
  algebras in $\hlalg(\Ax)$ validate $(\bigwedge \Gamma') \to \phi$.
  Then the usual Lindenbaum-Tarski construction gives:

\begin{theorem}
  Let $\Ax \subseteq \Lsto$ be a set of axioms and $\Gamma \seq \phi$ a consecution.
  Then $\Gamma \vdash_{\Ax} \phi$ if and only if $\hlalg(\Ax) \Vdash \Gamma \seq \phi$.
\end{theorem}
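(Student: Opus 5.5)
The plan is to prove the two directions separately: soundness by induction on derivations, and completeness by the Lindenbaum--Tarski construction.

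\emph{Soundness.} Suppose $\Gamma \vdash_{\Ax} \phi$. Since proof trees are finite, only a finite $\Gamma' \subseteq \Gamma$ is used as leaves via $\mathsf{(El)}$. We show, by induction on the derivation, that for every consecution $\Delta \seq \psi$ occurring in it, every $\gA \in \hlalg(\Ax)$ and every valuation $v$ satisfy $\bigwedge \hat v(\Delta') \leq \hat v(\psi)$ for some finite $\Delta' \subseteq \Delta$. The cases are as follows.
\begin{itemize}
  \item $\mathsf{(El)}$ is trivial.
  \item For $\mathsf{(Ax)}$, intuitionistic tautologies evaluate to $\top$ in any Heyting algebra. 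The axioms $\axKa$ and $\axTr$ hold by \lna{CK} and \lna{CT}. Instances of $\Ax$ hold by the definition of $\hlalg(\Ax)$, which quantifies over all valuations and so covers substitution instances.
  \item $\mathsf{(MP)}$ follows from $a \wedge (a \to b) \leq b$, taking the union of the two finite premise sets.
  \item For $\mathsf{(N_a)}$, the premise with empty context gives $\hat v(\phi) \leq \hat v(\psi)$ for all $v$. We must derive $\hat v(\phi) \sto \hat v(\psi) = \top$. From \lna{CK}, $\sto$ is monotone in its second argument: if $b \leq c$ then $a \sto b = a \sto (b \wedge c) = (a \sto b) \wedge (a \sto c) \leq a \sto c$. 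Hence $\top = a \sto a \leq a \sto b$ by \lna{CI}.
\end{itemize}

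\emph{Completeness.} Fix $\Gamma$ and work with the Lindenbaum--Tarski algebra of $\hlcf \oplus \Ax$ over the empty context. Define $\phi \equiv \psi$ iff $\vdash_{\Ax} \phi \to \psi$ and $\vdash_{\Ax} \psi \to \phi$. This is a congruence for the intuitionistic connectives by standard reasoning. It is also a congruence for $\sto$:
\begin{itemize}
  \item In the second argument, $\mathsf{(N_a)}$ gives $\psi \sto \psi'$ from $\psi \to \psi'$, and $\axTr$ then yields $(\phi \sto \psi) \to (\phi \sto \psi')$.
  \item In the first argument, $\mathsf{(N_a)}$ gives $\phi' \sto \phi$ from $\phi' \to \phi$, and $\axTr$ yields $(\phi \sto \psi) \to (\phi' \sto \psi)$.
\end{itemize}
The quotient $\gA_{\Ax}$ is a Heyting algebra. \lna{CK} holds by $\axKa$ together with the monotonicity just shown, \lna{CT} by $\axTr$, and \lna{CI} by $\mathsf{(N_a)}$ applied to $\phi \to \phi$. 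Moreover $\gA_{\Ax} \in \hlalg(\Ax)$: any valuation into the quotient is induced by a substitution, so each axiom evaluates to the class of a substitution instance, which is provable and hence equal to $\top$.

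Now assume $\hlalg(\Ax) \Vdash \Gamma \seq \phi$. Then there is a finite $\Gamma'$ with $\bigwedge \Gamma' \to \phi$ valid in $\gA_{\Ax}$ under the canonical valuation $p \mapsto [p]$. Thus $\vdash_{\Ax} \bigwedge \Gamma' \to \phi$. Using $\mathsf{(El)}$, conjunction introduction and $\mathsf{(MP)}$, we obtain $\Gamma \vdash_{\Ax} \phi$; monotonicity and the deduction rule from the Proposition can also be used here.

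The only step needing care is the $\sto$-congruence in the first argument, together with the closure of $\gA_{\Ax}$ under the axioms via substitution. Both are settled by $\mathsf{(N_a)}$ and $\axTr$, and by structurality of the logic (the Proposition above).
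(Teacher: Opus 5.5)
Your proposal is correct and takes the same route as the paper, which simply appeals to soundness plus the usual Lindenbaum--Tarski construction; you supply the details the paper leaves implicit, namely $\sto$-congruence via $\mathsf{(N_a)}$ and $\axTr$, and membership of the quotient in $\hlalg(\Ax)$ via substitution instances. One point deserves explicit mention: in the soundness induction, take the finite $\Delta'$ to be the set of $\mathsf{(El)}$-leaves of the derivation, so that it is uniform in $\gA$ and $v$, as the definition of $\hlalg(\Ax) \Vdash \Gamma \seq \phi$ requires.
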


\section{Relational semantics for flat Heyting-Lewis logic}

  We introduce relational semantics for $\hlcf$, first in the most general
  version (Section~\ref{sec:flatfra}), then in the ``upward-flat'' variant
  (Section~\ref{sec:upflat}) simplifying calculations of correspondents and
  completeness proofs. In Section~\ref{sec:sharpflat} we compare the flat
  semantics to the sharp semantics from Definition~\ref{def:sharp-model}.

\subsection{Flat frames} \label{sec:flatfra}

\begin{definition}\label{def:cfrm}
  A \emph{flat frame} is a tuple $(W, \fleq, R)$ consisting of a
  nonempty set $W$, a preorder $\fleq$ on $W$ and a relation $R$ on $W$.
  A \emph{flat model} is a pair $\mo{M} = (\mo{F}, V)$ consisting of a
  flat frame $\mo{F} = (W, \fleq, R)$ and a valuation
  $V : \Prop \to \up(W, \fleq)$ that assigns to each
  proposition letter $p$ an upset $V(p)$ of $(W, \fleq)$.
  The interpretation of $\Lsto$-formulas at a world $w \in W$ extends the
  usual intuitionistic semantics with
  \begin{align*}
    \mo{M}, w \Vdash \phi \sto \psi
      &\iff \text{for all } w' \fgeq w, 
            \text{ if } \mo{M}, v \Vdash \phi 
            \text{ for all } v \in W 
            \text{ such that } w'Rv \\
      &\phantom{\iff \text{for all } w' \geq w, }\ %
            \text{ then } \mo{M}, v \Vdash \psi 
            \text{ for all } v \in W 
            \text{ such that } w'Rv
  \end{align*}
  The \emph{truth set} of $\phi$ is given by
  $\llb \phi \rrb^{\mo{M}} = \{ w \in W \mid \mo{M}, w \Vdash \phi \}$.
  
  Let $\Gamma \cup \{ \phi \} \subseteq \Lsto$
  and let $\mf{M}$ be a flat model.
  We write $\mf{M}, w \models \Gamma$ if $w$ satisfies all $\psi \in \Gamma$,
  and we say that $\mf{M}$ \emph{validates} $\Gamma \seq \phi$ if
  $\mf{M}, w \models \Gamma$ implies $\mf{M}, w \models \phi$ for all worlds
  $w$ in $\mf{M}$.
  A flat frame $\mf{F}$ \emph{validates} $\Gamma \seq \phi$ if every model
  of the form $(\mf{F}, V)$ validates the consecution, and it validates a
  formula $\phi$ if it validates the consecution $\emptyset \seq \phi$.
  If $\Ax \subseteq\Lsto$ is a set of axioms, then
  we write $\Gamma \Vdash_{\Ax} \phi$,
  and say that \emph{$\Gamma$ semantically entails $\phi$ on the class of
  flat frames for $\hlcf \oplus \Ax$}, if every flat frame that
  validates all formulas in $\Ax$ also validates the consecution $\Gamma \seq \phi$.
\end{definition}

  Using truth-set notation, we have
  $\mo{M}, w \Vdash \phi \sto \psi$ iff $R[w'] \subseteq \llb \phi \rrb^{\mo{M}}$
  implies $R[w'] \subseteq \llb \psi \rrb^{\mo{M}}$, for all $w' \fgeq w$.
  To illustrate the subtleties of this semantics (even in the classical setting),
  we give two examples showing that~\eqref{ax:di} and the reverse of~\eqref{ax:lb} are not valid.

\medskip\noindent
\begin{minipage}{.82\textwidth}
\begin{example} \label{ex:nodi}
  Consider the flat model $(W, \fleq, R)$ where $W = \{ w, v, u \}$,
  the intuitionistic accessibility relation $\fleq$ is the reflexive closure
  of the three worlds, and $R$ is given by $wRv$ and $wRu$.
  Let $V$ be a valuation such that $V(p) = \{ v \}$, $V(q) = \{ u \}$ and $V(r) = \emptyset$.
  Then $w \Vdash p \sto r$ because $w$ has no intuitionistic successor $x$ such
  that $R[x] \subseteq V(p)$, so the truth condition for $p \sto r$ is vacuously true.
  Similarly, $w \Vdash q \sto r$.
  But $w \not\Vdash (p \vee q) \sto r$, because every $R$-successor of $w$
  satisfies $p \vee q$, but not every successor satisfies $r$. This shows
  that $\axDi$ is not valid on flat frames.
\end{example}
\end{minipage}
\begin{minipage}{.17\textwidth}
  \begin{flushright}
    \begin{tikzpicture}[yscale=1.2,xscale=.8]
        \node (w) at (0,0) {$w$};
        \node (v) at (2,.5) {$v$};
        \node (u) at (2,-.5) {$u$};
        \draw[-Circle] (w) to (v);
        \draw[-Circle] (w) to (u);
    \end{tikzpicture}
  \end{flushright}
\end{minipage}

\bigskip\noindent
\begin{minipage}{.82\textwidth}
\begin{example} \label{ex:classical}
  Consider the flat model depicted on the right with $V(p) = \{ s \}$ and
  $V(q) = \emptyset$.
  Then $w, v \not\Vdash \Box p$ because $u \not\Vdash p$, and hence
  $w \Vdash \Box p \to \Box q$.
  On the other hand, the fact that $R[v]$ is contained in
  $V(p)$ but not in $V(q)$ shows that $w \not\Vdash p \sto q$.
  Therefore $w \not\Vdash (\Box p \to \Box q) \to (p \sto q)$,
  i.e.~the reverse direction of~\eqref{ax:lb} is false.
\end{example}
\end{minipage}
\begin{minipage}{.17\textwidth}
  \begin{flushright}
    \begin{tikzpicture}[yscale=.9,xscale=.8]
        \node (w) at (0,0) {$w$};
        \node (v) at (0,1.5) {$v$};
        \node (u) at (2,0) {$u$};
        \node (s) at (2,1.5) {$s$};
        \draw[bend left=20, -latex] (w) to (v);
        \draw[bend left=20, -latex] (v) to (w);
        \draw[-Circle] (w) to (u);
        \draw[-Circle] (v) to (s);
    \end{tikzpicture}
  \end{flushright}
\end{minipage}

\bigskip
  
  A routine induction on the structure of $\phi$ allows us to prove:

\begin{lemma}[Intuitionistic heredity]
  Let $\mo{M} = (W, \fleq, R, V)$ be a flat model.
  Then for all $\phi \in \Lsto$ and all $w, v \in W$,
  if $\mo{M}, w \Vdash \phi$ and $w \fleq v$ then $\mo{M}, v \Vdash \phi$.
\end{lemma}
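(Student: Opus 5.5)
We will prove the statement by induction on the structure of $\phi$, fixing a flat model $\mo{M} = (W, \fleq, R, V)$ and worlds $w \fleq v$ with $\mo{M}, w \Vdash \phi$. Only two facts about $\fleq$ are used: it is transitive, and valuations assign upsets. Notably, no interaction condition between $R$ and $\fleq$ is required, which is the feature the flat semantics is designed to have.

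\emph{Base cases.} For a proposition letter $p$, we have $w \in V(p)$, and $V(p)$ is an upset of $(W, \fleq)$, so $v \in V(p)$. The case $\top$ is trivial, and $\bot$ holds vacuously since it is never forced.

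\emph{Propositional connectives.} For $\wedge$ and $\vee$ we apply the induction hypothesis to the relevant immediate subformula(s). For $\phi \to \psi$ we use the usual intuitionistic clause, quantifying over all $\fleq$-successors. Suppose $w \Vdash \phi \to \psi$ and $w \fleq v$, and let $v \fleq v'$. By transitivity $w \fleq v'$, so the clause at $w$ already covers $v'$: if $v' \Vdash \phi$ then $v' \Vdash \psi$. Hence $v \Vdash \phi \to \psi$. This case needs no induction hypothesis.

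\emph{Strict implication.} This is the only case specific to the new semantics, and it is handled the same way as $\to$. The clause for $\phi \sto \psi$ at $w$ quantifies over all $w' \fgeq w$, requiring that $R[w'] \subseteq \llb \phi \rrb^{\mo{M}}$ implies $R[w'] \subseteq \llb \psi \rrb^{\mo{M}}$. Given $w \fleq v$, every $w' \fgeq v$ satisfies $w' \fgeq w$ by transitivity, so the required implication for $w'$ is inherited from $w$. Thus $v \Vdash \phi \sto \psi$. Persistence is built directly into the clause, so nothing about $R$ is needed.

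I expect no real obstacle. The one point to check carefully is that the $\sto$-clause quantifies over $\fleq$-successors of the evaluation world and evaluates $R$ only at those successors, which is exactly what makes transitivity of $\fleq$ sufficient.
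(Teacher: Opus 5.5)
Your proposal is correct and is the routine structural induction the paper invokes without giving details. The case analysis matches the paper's own remark that the $\sto$ clause enforces upward persistence by itself. Atoms use that valuations are upsets, and both $\to$ and $\sto$ use only transitivity of $\fleq$, so no condition relating $R$ and $\fleq$ is needed.
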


  Every flat frame gives rise to a $\hlalg$ via its
  complex algebra.
  
\begin{definition}
  The \emph{complex algebra} of a flat frame $\mo{F} = (W, \fleq, R)$ is
  $\duf{\mo{F}} := \la \up(W, \fleq), \cap, \cup, \undto, \undstof, W, \emptyset \ra$,
  where
  \begin{align*}
    a \undto b &:= \{ w \in W \mid \text{for all } v \fgeq w, \, v \in a \text{ implies } v \in b \} \\
    a \undstof b &:= \{ w \in W \mid \text{for all } v \fgeq w, \, R[v] \subseteq a \text{ implies } R[v] \subseteq b \}.
  \end{align*}
\end{definition}

\begin{lemma}\label{lem:complex-alg}
  If $\mo{F} = (W, \fleq, R)$ is a flat frame then $\duf{\mo{F}}$
  is a $\hlalg$,
  and $\mo{F}$ and $\duf{\mo{F}}$ validate precisely the same consecutions.
\end{lemma}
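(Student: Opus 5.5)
The proof splits into two parts: first we check that $\duf{\mo{F}}$ is a well-defined $\hlalg$, and then we show that valuations on $\mo{F}$ and on $\duf{\mo{F}}$ interpret formulas identically.

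\emph{Well-definedness.} It is standard that $\la \up(W,\fleq), \cap, \cup, \undto, W, \emptyset\ra$ is a Heyting algebra, since $\fleq$ is a preorder. For $a \undstof b$ to be an upset we use transitivity of $\fleq$: if $w \in a \undstof b$ and $w \fleq w'$, then every $v \fgeq w'$ also satisfies $v \fgeq w$, so the defining condition still holds at $w'$.

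\emph{The laws \lna{CK}, \lna{CT} and \lna{CI}.} It is convenient to write, for an upset $a$ and a world $v$, $P_a(v)$ for the statement ``$R[v]\subseteq a$''. Then $w \in a\undstof b$ means that for all $v\fgeq w$, $P_a(v)$ implies $P_b(v)$. Each law now reduces, pointwise at each $v \fgeq w$, to a propositional fact about these statements.
\begin{itemize}
\item For \lna{CK}, we use that $P_{b\cap c}(v)$ is equivalent to the conjunction of $P_b(v)$ and $P_c(v)$. This gives both inclusions. The inclusion $\supseteq$ also needs monotonicity: $P_{b\cap c}(v)$ implies $P_b(v)$.
\item For \lna{CT}, we chain the two implications: $P_a(v)$ gives $P_b(v)$, which gives $P_c(v)$.
\item For \lna{CI}, the implication from $P_a(v)$ to $P_a(v)$ is trivial, so $a \undstof a = W$.
\end{itemize}

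\emph{Same validities.} A valuation $V$ on $\mo{F}$ is exactly a map $\Prop\to\up(W,\fleq)$, which is exactly an algebraic valuation $v$ on $\duf{\mo{F}}$. We prove by induction on $\phi$ that $\hat v(\phi)=\llb\phi\rrb^{(\mo{F},V)}$. The propositional cases are the usual ones. The $\sto$ case is immediate: the truth-set reformulation given right after Definition~\ref{def:cfrm} coincides literally with the definition of $\undstof$, once the induction hypothesis is applied to $\phi$ and $\psi$.

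Next we compare the two notions of validity for a consecution. On the frame side, validity of $\Gamma\seq\phi$ is pointwise: every world satisfying all of $\Gamma$ satisfies $\phi$, for every valuation. On the algebra side, $\hlalg$-validity is phrased through a finite $\Gamma'\subseteq\Gamma$ with $\bigwedge\Gamma'\to\phi$ evaluating to $\top$. For finite $\Gamma$ these agree: $\bigcap_{\psi\in\Gamma}\llb\psi\rrb\subseteq\llb\phi\rrb$ is equivalent to $\llb\bigwedge\Gamma\to\phi\rrb = W$, because truth sets are upsets and the implication is evaluated at $\fleq$-successors.

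For infinite $\Gamma$ this is the main obstacle, since the pointwise frame notion need not be compact. I will therefore read ``the consecutions validated by $\duf{\mo{F}}$'' as the pointwise inclusion $\bigcap_{\psi\in\Gamma}\hat v(\psi)\subseteq\hat v(\phi)$ for all valuations $v$. Under this reading, the equality $\hat v = \llb\cdot\rrb$ transfers validity in both directions at once. For the finitary reading, we restrict attention to finite $\Gamma$; in particular, formulas (consecutions with empty premise set) are validated by $\mo{F}$ exactly when they are validated by $\duf{\mo{F}}$.
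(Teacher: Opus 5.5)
Your proof is correct and follows the paper's route: upsets of the preorder form a Heyting algebra, you check \lna{CK}, \lna{CT} and \lna{CI} pointwise at each $v \fgeq w$, and you use the bijection between valuations together with an induction showing $\hat v(\phi) = \llb\phi\rrb$. Your explicit treatment of infinite premise sets, where you read algebraic validity pointwise because the frame notion need not be compact, spells out a point the paper's one-line argument leaves implicit.
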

\begin{proof}
  We know that the upsets with the given operations form a Heyting algebra.
  A routine verification shows that $\undstof$ satisfies~\eqref{it:alg-ck},
  \eqref{it:alg-ct} and~\eqref{it:alg-ci}.
  The second part of the lemma follows from the fact that valuations for
  $\mo{F}$ correspond bijectively with assignments in $\duf{\mo{F}}$,
  and that the interpretation of connectives in $\mo{F}$ corresponds to
  that in $\duf{\mo{F}}$.
\end{proof}

  Combining Lemma~\ref{lem:complex-alg} and algebraic soundness proves:

\begin{proposition}\label{prop:soundness}
  For any $\Ax \subseteq \Lsto$ and any
  $\Gamma \cup \{ \phi \} \subseteq \Lsto$, we have that
  $\Gamma \vdash_{\Ax} \phi$ implies $\Gamma \Vdash_{\Ax} \phi$.
\end{proposition}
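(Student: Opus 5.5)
The plan is to reduce soundness to the algebraic soundness half of the Lindenbaum--Tarski theorem (the theorem at the end of Section~\ref{sec:alg}) and then transfer along the complex-algebra construction via Lemma~\ref{lem:complex-alg}. Suppose $\Gamma \vdash_{\Ax} \phi$, and let $\mo{F}$ be a flat frame validating every formula in $\Ax$. We must show that $\mo{F}$ validates $\Gamma \seq \phi$.

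First, by Lemma~\ref{lem:complex-alg}, $\duf{\mo{F}}$ is a $\hlalg$. Since $\mo{F}$ and $\duf{\mo{F}}$ validate the same consecutions, and $\mo{F}$ validates each $\psi \in \Ax$ (that is, $\emptyset \seq \psi$), $\duf{\mo{F}}$ validates each such $\psi$. Hence $\duf{\mo{F}} \in \hlalg(\Ax)$. Second, the theorem of Section~\ref{sec:alg} gives $\hlalg(\Ax) \Vdash \Gamma \seq \phi$: there is a finite $\Gamma' \subseteq \Gamma$ such that every algebra in $\hlalg(\Ax)$ validates $\bigwedge\Gamma' \to \phi$. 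In particular $\duf{\mo{F}}$ does.

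Third, I transfer back to the frame. Using Lemma~\ref{lem:complex-alg} once more, $\mo{F}$ validates the formula $\bigwedge \Gamma' \to \phi$. Fix a valuation $V$ and a world $w$ with $\mo{F}, V, w \models \Gamma$. Then $w$ satisfies $\bigwedge\Gamma'$, and reflexivity of $\fleq$ in the clause for $\to$ yields $w \Vdash \phi$. So $\mo{F}$ validates $\Gamma \seq \phi$. Since $\mo{F}$ was an arbitrary flat frame validating $\Ax$, we conclude $\Gamma \Vdash_{\Ax} \phi$.

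The only delicate point is the matching of notions of validity: algebraic validity means the value is $\top$, while frame validity of a consecution is pointwise. Lemma~\ref{lem:complex-alg} handles this via the bijection between valuations and assignments, under which $\hat v(\chi) = \llb \chi \rrb$. If one prefers not to rely on that lemma's second clause for consecutions, a direct induction on derivations works instead. The key cases are $\mathsf{(N_a)}$, where validity of $\phi \to \psi$ means $\llb\phi\rrb \subseteq \llb\psi\rrb$ in every model, so $R[w'] \subseteq \llb\phi\rrb$ implies $R[w'] \subseteq \llb\psi\rrb$, and the axioms $\axKa$ and $\axTr$, which follow from the pointwise inclusion reading of the $\sto$ clause.
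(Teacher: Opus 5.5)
Your proposal is correct and follows the paper's own route: the paper proves this proposition in one line, by combining Lemma~\ref{lem:complex-alg} (the complex algebra is an $\hlalg$ and validates the same consecutions as the frame) with algebraic soundness. Your reduction via the finite $\Gamma' \subseteq \Gamma$ and reflexivity of $\fleq$ is just that argument written out in detail.
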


\begin{remark} \label{rem:dualityhard}
One might expect that, at least in the finite setting, turning such a complex algebra back into a flat frame should be straightforward, with join-prime elements providing the carrier set of the frame. Example~\ref{ex:classical} illustrates that this is not the case: the Heyting reduct of the dual algebra is the Boolean algebra with three atoms (join-primes). Collapsing the $\{v,w\}$ cluster would change the equational theory. The right approach to duality, similar to the one pursued in~\cite{Wij90,groshiclo25}, uses a suitable algebraic translation of the notion  of the notion a segment introduced in Section~\ref{sec:canonical}, potentially blowing up the number of states (cf.~estimates in the proof of Lemma~\ref{lem:fmp-comp}). While many segments can often be eliminated (cf.~Remark~\ref{rem:notall} and~Section~\ref{subsec:alt-can-mod}), care is needed. 
\end{remark}

\subsection{Upward-flat frames} \label{sec:upflat}

  When only using the modal relation to interpret $\sto$, we can make a
  simplification to our frames and assume that $R[w]$ is an upset
  for each $w$.

\begin{definition}
  An \emph{upward-flat frame} is a flat frame $(W, \fleq, R)$ such that 
  for all $w, v, u \in W$, if $w R v \fleq u$ then $w R u$.
  A \emph{upward-flat model} is a flat model whose underlying frame is upward-flat.
\end{definition}

  The coherence condition on the relation can be read as $(R \circ {\fleq}) = R$.
  While not strictly required, it simplifies the correspondence
  results for some of the additional axioms we consider in Section~\ref{sec:extensions}.

\begin{proposition} \label{prop:upsame}
  Let $\mo{F} = (W, \fleq, R)$ be a flat frame.
  Define $R_{\fleq} := R \circ {\fleq}$,
  i.e.~$w R_{\fleq} u$ if there exists a $v$ such that $w R v \fleq u$,
  and let $\mo{F}_{\fleq} = (W, \fleq, R_{\fleq})$.
  Then $\mo{F}$ and $\mo{F}_{\fleq}$ have the same complex algebra.
\end{proposition}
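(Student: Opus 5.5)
The carrier, the Heyting operations and the constants of $\duf{\mo{F}}$ and $\duf{(\mo{F}_{\fleq})}$ depend only on $(W,\fleq)$, so they agree. It therefore suffices to show that $\undstof$ computed with $R$ coincides with $\undstof$ computed with $R_{\fleq}$ on all pairs of upsets $a,b \in \up(W,\fleq)$. Since the definition of $a \undstof b$ only involves conditions of the form $R[v] \subseteq a$ and $R[v] \subseteq b$, I will show the following pointwise key claim:
\[
  \text{for every upset } a \text{ and every } v \in W,\quad R[v] \subseteq a \iff R_{\fleq}[v] \subseteq a .
\]

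To prove the key claim, first recall that $\fleq$ is reflexive, so $R \subseteq R_{\fleq}$ and hence $R[v] \subseteq R_{\fleq}[v]$. This yields the right-to-left direction immediately. For the left-to-right direction, suppose $R[v] \subseteq a$ and let $u \in R_{\fleq}[v]$. Then there is some $x$ with $vRx \fleq u$. We get $x \in a$ from the assumption, and since $a$ is an upset, $u \in a$.

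Applying the key claim once to $a$ and once to $b$ at every $\fleq$-successor $v$ of $w$ shows that the defining conditions of $a \undstof b$ in the two frames are equivalent. Hence the two operations are equal.

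There is no real obstacle here. The only point requiring care is that the argument uses that $a$ and $b$ are upsets, which is exactly why the claim concerns complex algebras: the valuations, and hence all truth sets, are upsets. As a corollary, $\mo{F}$ and $\mo{F}_{\fleq}$ validate the same consecutions by Lemma~\ref{lem:complex-alg}. Finally, I will note that $\mo{F}_{\fleq}$ is upward-flat, because $\fleq$ is transitive: if $wRy\fleq v \fleq u$ then $wRy\fleq u$.
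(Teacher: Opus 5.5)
Your proposal is correct and takes the same approach as the paper: the key claim that $R[v] \subseteq a$ iff $R_{\fleq}[v] \subseteq a$ for every upset $a$ is exactly the paper's argument, and you spell out the two directions (reflexivity of $\fleq$ for one, upward closure of $a$ for the other) that the paper leaves implicit. Your side remarks are also correct: $\mo{F}$ and $\mo{F}_{\fleq}$ validate the same consecutions, and $\mo{F}_{\fleq}$ is upward-flat because $\fleq$ is transitive.
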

\begin{proof}
  Let $a$ be an upset of $(W, \fleq)$ and $w \in W$. Then
  $R[w] \subseteq a$ if and only if $R_{\fleq}[w] \subseteq a$.
  This entails that the change from $\mo{F}$ to $\mo{F}_{\fleq}$
  leaves the definition of $\undstof$ unchanged,
  so that $\duf{F} = \duf{(F_{\fleq})}$.
\end{proof}

  It is often easier to find and depict frame correspondence results for upward-flat
  frames than for arbitrary ones. The definition and proposition above
  show that these can always be transformed into arbitrary frame conditions:
  simply replace every occurrence of $R$ with $(R \circ {\fleq})$. 
  To illustrate the difference,  consider the axiom $\foura: \phi \sto (\top \sto \phi)$.

\begin{proposition} \label{prop:fourcor} \
\begin{enumerate}
\item   A flat frame validates $\foura$ if and only if for all
  $x, y, z, w$ such that $x R y \fleq z R w$, there exists $v$ such that
  $x R v \fleq w$.
\item An upward-flat frame validates $\foura$ if and only if $R$ is transitive.
\end{enumerate}
\end{proposition}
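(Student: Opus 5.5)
The plan is to unfold the semantic clause for $\foura = \phi \sto (\top \sto \phi)$ (taking $\phi = p$) and then do a standard correspondence argument, using a minimal valuation for the hard direction. First compute $\Box p = \top \sto p$. Since $R[v'] \subseteq \llb \top \rrb = W$ always holds, we get $\mo{M}, v \Vdash \Box p$ iff $R[v'] \subseteq V(p)$ for all $v' \fgeq v$. Hence $\mo{M}, w \Vdash p \sto \Box p$ iff for every $x \fgeq w$ with $R[x] \subseteq V(p)$, and for all $y \in R[x]$ and $z \fgeq y$, we have $R[z] \subseteq V(p)$. By intuitionistic heredity and reflexivity of $\fleq$, the frame validates $\foura$ iff for every upset $a$ and every $x$:
\[
  R[x] \subseteq a \ \text{implies}\ R[z] \subseteq a \ \text{whenever}\ x R y \fleq z .
\]
Using the instance $w := x$ suffices.

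For item 1, the direction from the frame condition to validity is immediate. Suppose $R[x] \subseteq a$ and $x R y \fleq z R w$. The condition gives $v$ with $x R v \fleq w$. Then $v \in a$, and since $a$ is an upset, $w \in a$.

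For the converse, which is the only step needing a choice, take the minimal valuation $V(p) := R_{\fleq}[x] = \{ u \mid \exists v\,(x R v \fleq u)\}$. This is an upset because $\fleq$ is transitive. It contains $R[x]$ because $\fleq$ is reflexive. Validity of $\foura$ at $x$, with $w' = x$, then yields $R[z] \subseteq V(p)$ for every $z$ with $x R y \fleq z$. So any $w$ with $z R w$ satisfies $x R v \fleq w$ for some $v$, which is exactly the stated condition.

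For item 2, observe that in an upward-flat frame $R_{\fleq} = R$, so the condition of item 1 reads: $x R y \fleq z R w$ implies $x R w$. Taking $z = y$ (reflexivity) gives transitivity of $R$. Conversely, if $R$ is transitive, then from $x R y \fleq z$ upward-flatness gives $x R z$, and $z R w$ with transitivity gives $x R w$, which is the condition with $v = w$. I expect no genuine obstacle. The only points needing care are that the minimal valuation must be $\fleq$-upward closed, which is why the witness $v$ (rather than $w$ itself) appears in item 1, and that the vacuous antecedent for $\top$ really makes $\Box$ a plain universal quantifier over $R[v']$.
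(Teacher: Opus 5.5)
Your proof is correct and follows the paper's argument. For item 1 you use the same unfolding of $\Box p$ and the same minimal valuation $V(p) = {\uparrow}R[x] = R_{\fleq}[x]$. For item 2 you carry out the ``straightforward simplification'' of item 1 that the paper mentions; the paper's additional direct argument uses $V(p) = R[w]$, which is the same valuation once $R_{\fleq} = R$.
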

\begin{proof}
  (1) \;  Suppose $\mo{W}$ validates $\foura$, and $xRy\fleq z R w$ for some worlds
  $x, y, z, w$. Let $V$ be a valuation of $p$ with $V(p) = {\uparrow}R[x]$.
  Then all worlds in $R[x]$ satisfy $p$, so by assumption they also all satisfy
  $\top \sto p$. In particular, $y \Vdash \top \sto p$,
  and since $y \fleq z$ and (trivially) all worlds in $R[z]$ satisfy $\top$,
  we must have $w \Vdash p$. By definition, this means that $w$ lies above
  some $R$-successor $v$ of $x$, as desired.
  
  Conversely, suppose $\mo{W}$ satisfies the frame condition.
  Let $x$ be any world. To show that it satisfies $\foura$,
  let $x \fleq x'$ and suppose all worlds in $R[x']$ satisfy $p$.
  Then we need that all worlds in $R[x']$ satisfy $\top \sto p$.
  Let $y$ be such a world. Since $\top$ is always true, we need to prove
  that $y \fleq z R w$ implies that $w \Vdash p$.
  This follows from the frame condition.
  So $\foura$ is valid.
  
  (2) \; This is a straightforward simplification of the above condition.
  For readers' convenience, we provide a direct proof.
  Suppose $R$ is transitive and let $w$ be a world such that $R[w] \subseteq V(p)$.
  Then by assumption $R[R[w]] \subseteq \llb p \rrb$, and since $wRv \fleq u R s$
  implies $w R u R s$ we have $R[u] \subseteq \llb p \rrb$ for every $u \in R[w]$,
  so that $R[w] \subseteq \llb \top \sto p \rrb$.
  Conversely, suppose $w R v R u$.
  Let $V$ be a valuation such that $V(p) = R[w]$.
  Then $R[w] \subseteq V(p)$, so we must have $R[w] \subseteq \llb \top \sto \phi \rrb$.
  This forces $R[R[w]] \subseteq V(p) = R[w]$.
  In particular, we have $u \in R[R[w]] \subseteq R[w]$ so $w R u$.
  Therefore $R$ is transitive.
\end{proof}

\subsection{Relation to sharp semantics} \label{sec:sharpflat}

  The sharp semantics for $\hlcs$ can be embedded into flat semantics in a
  truth preserving way. This gives rise to a completeness result for
  $\hlcs$ with respect to flat semantics. We start with a simple sufficient
  condition for a flat frame to validate $\axDi$.
  Let us call a flat frame $\mo{F} = (W, \fleq, R)$ \emph{pointwise downward
  directed} if $R[w]$ is a downward directed subset of $(W, \fleq)$
  for every $w \in W$.

\begin{lemma} \label{lem:dicom}
  If a flat frame $\mo{F} = (W, \fleq, R)$ is pointwise downward directed,
  then it validates $\axDi$
\end{lemma}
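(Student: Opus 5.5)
The idea is to exploit the fact that truth sets are upsets: if a downward directed set lies inside the union of two upsets, it already lies inside one of them. Concretely, I will unfold the semantic clause for $\sto$ and verify $\axDi$ directly at an arbitrary world. Fix a flat model $\mo{M}$ on a pointwise downward directed frame and a world $w$ with $w \Vdash (\phi \sto \chi) \wedge (\psi \sto \chi)$. By the truth-set reformulation of the clause, it suffices to consider $w' \fgeq w$ with $R[w'] \subseteq \llb \phi \vee \psi \rrb = \llb \phi \rrb \cup \llb \psi \rrb$, and to show $R[w'] \subseteq \llb \chi \rrb$. Intuitionistic heredity gives $w' \Vdash (\phi \sto \chi) \wedge (\psi \sto \chi)$, although the clause already quantifies over all $\fgeq$-successors, so this is not even needed. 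The same argument also covers the implication under all $\fgeq$-successors of the evaluation world, since the antecedent is persistent.

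The key step is the following claim: either $R[w'] \subseteq \llb \phi \rrb$ or $R[w'] \subseteq \llb \psi \rrb$.

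1. Suppose neither holds. Then there are $v_1, v_2 \in R[w']$ with $v_1 \not\Vdash \phi$ and $v_2 \not\Vdash \psi$.
2. By downward directedness of $R[w']$, there is $u \in R[w']$ with $u \fleq v_1$ and $u \fleq v_2$.
3. Since $u \in R[w']$, we have $u \Vdash \phi$ or $u \Vdash \psi$.
4. By intuitionistic heredity, $v_1 \Vdash \phi$ or $v_2 \Vdash \psi$, which is a contradiction.

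Given the claim, apply the relevant conjunct, say $\phi \sto \chi$, at $w'$ itself; this is allowed because $w' \fgeq w$. This yields $R[w'] \subseteq \llb \chi \rrb$, as required.

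One degenerate case must be handled: $R[w']$ empty. The usual definition of directed requires nonemptiness, but then the inclusion $R[w'] \subseteq \llb \chi \rrb$ holds trivially. So it does not matter which convention the paper intends.

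The main (mild) obstacle is making sure the directedness argument uses heredity in the correct direction: the lower bound $u$ transfers truth upward to $v_1$ and $v_2$. No other step seems problematic.
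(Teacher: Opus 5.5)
Your proposal is correct and follows essentially the same route as the paper's proof. Both rest on the same key claim: a downward directed $R[w']$ contained in $\llb \phi \rrb \cup \llb \psi \rrb$ lies inside one of the two truth sets, since a common $\fleq$-lower bound of two witnesses would violate heredity. The hypothesis is then applied at $w'$. Your separate treatment of $R[w'] = \emptyset$ is harmless but unnecessary, since the claim holds vacuously in that case.
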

\begin{proof}
  Suppose $\mo{F}$ is a flat frame that is pointwise downward directed,
  $\mo{M} = (\mo{F}, V)$ is a flat model based on $\mo{F}$
  and $w \in W$ satisfies $p \sto r$ and $q \sto r$.
  Suppose $w' \fgeq w$ and $R[w'] \subseteq \llb p \vee q \rrb = V(p) \cup V(q)$.
  We claim that either $R[w'] \subseteq V(p)$ or $R[w'] \subseteq V(q)$.
  If this is not the case, then we can find $v, u \in R[w']$ such that
  $v \notin V(p)$ and $u \notin V(q)$. By assumption there exists some
  $s \in R[w']$ such that $s \fleq v$ and $s \fleq u$.
  But then $s \notin V(p) \cup V(q)$, a contradiction.
  So we must have $R[w'] \subseteq V(p)$ or $R[w'] \subseteq V(q)$.
  In either case, using the assumption yields $R[w'] \subseteq V(r)$.
  This proves $w \Vdash (p \vee q) \sto r$, and hence $\axDi$ is valid
  on~$\mo{F}$.
\end{proof}

  Next, we turn a sharp model into a flat one.
  Intuitively, for each $w$ we create a cluster such that each element
  of the cluster can modally access precisely one of the worlds in $R[w]$.

\begin{definition}
  Let $\mo{F} = (W, \leq, R, V)$ be a sharp frame and $\bullet$ a symbol such
  that $\bullet \notin W$.
  Define $\mo{F}^{\flat} = (W^{\flat}, \preceq, \mc{R}, V^{\flat})$, where
  \begin{align*}
    &W^{\flat} := \{ (w, v) \mid w \in W \text{ and } w R v \} \cup \{ (w, \bullet) \mid w \in W \text{ and } R[w] = \emptyset \} \quad
      & (w, v) \preceq (w', v') &\iff w \leq w' \\
    &V^{\flat}(p) := \{ (w, v) \in W^{\flat} \mid w \in V(p) \}
      & (w, v) \mc{R} (w', v') &\iff v \leq w' 
  \end{align*}
\end{definition}

  Note that the definition of $\mc{R}$ ensures that $\mo{M}^{\flat}$ is and
  upward-flat model. Moreover, for each $(w, v) \in W^{\flat}$
  we have that $\mc{R}[(w, v)]$ is the upward closure (under $\fleq$)
  of $(v, u)$, for any $u \in W \cup \{ \bullet \}$ such that $(v, u) \in W^{\flat}$.
  This implies that $\mo{M}$ is pointwise downward directed.
  
\begin{proposition}\label{prop:sharp-to-flat}
  Let $\mo{M} = (W, \leq, R, V)$ be a sharp model,
  and $\mo{M}^{\flat} = (W^{\flat}, \preceq, \mc{R}, V^{\flat})$ the
  corresponding flat model. Then for all $(w, v) \in W^{\flat}$ and
  all formulas $\phi$, we have
  $\mo{M}, w \Vdashs \phi$ if and only if $\mo{M}^{\flat}, (w, v) \Vdash \phi$.
\end{proposition}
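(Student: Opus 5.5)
We prove the equivalence by induction on the structure of $\phi$, simultaneously for all $(w,v) \in W^{\flat}$. The argument rests on two observations. First, the first component of a world in $\mo{M}^{\flat}$ determines its $\fleq$-position: $(w,v) \fleq (w',v')$ holds iff $w \leq w'$. Second, every $w' \in W$ occurs as the first component of some element of $W^{\flat}$, namely $(w',u)$ with $u \in R[w']$, or $(w',\bullet)$ if $R[w']=\emptyset$.

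The atomic case holds by the definition of $V^{\flat}$, and the cases for $\top$, $\bot$, $\wedge$ and $\vee$ are immediate from the induction hypothesis. For $\phi \to \psi$, the $\fleq$-successors of $(w,v)$ are exactly the pairs $(w',v')$ with $w \leq w'$. Using the induction hypothesis together with the fact that every $w' \geq w$ has at least one representative pair, the flat clause at $(w,v)$ becomes exactly the sharp clause at $w$. (Here we read the sharp clause as intended, with evaluation at $w'$.)

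The main case is $\phi \sto \psi$. By the remark preceding the statement, for each $(w',u) \in W^{\flat}$ we have
\[
\mc{R}[(w',u)] = \{(x,y) \in W^{\flat} \mid u \leq x\}.
\]
When $u = \bullet$ this set is empty, since $\bullet \leq x$ never holds. By the induction hypothesis and upward persistence of sharp truth, $\mc{R}[(w',u)] \subseteq \llb\chi\rrb^{\mo{M}^{\flat}}$ iff $\mo{M}, u \Vdashs \chi$ for $u \in W$, for $\chi \in \{\phi,\psi\}$. For the upward direction of this equivalence, use the pair with first component $u$ itself; for the converse, use persistence along $u \leq x$.

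So $(w,v) \Vdash \phi \sto \psi$ reduces to the following: for all $w' \geq w$ and all $u$ with $w' R u$, if $u \Vdashs \phi$ then $u \Vdashs \psi$. Pairs $(w',\bullet)$ are vacuous on both sides. It remains to see that this condition is equivalent to $w \Vdashs \phi \sto \psi$. If $w \Vdashs \phi \sto \psi$, then $w \leq w'$ and $w' R u$ give $w R u$ by the sharp frame condition, so the quantification over $w' \geq w$ adds nothing. The converse is the instance $w' = w$.

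The key step is this $\sto$ case. There we must check carefully that the flat clause's quantification over $\fleq$-successors and over entire $\mc{R}$-images collapses to the pointwise sharp clause. The collapse uses two facts: each $\mc{R}$-image is the principal upset generated by a single sharp world, and the sharp frame condition $w \leq w' R u \Rightarrow w R u$ absorbs the extra quantifier.
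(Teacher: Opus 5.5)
Your proposal is correct and follows the paper's proof. Both argue by induction on $\phi$, and in the $\sto$ case both identify $\mc{R}[(w',u)]$ with the pairs whose first component lies above $u$, use heredity of $\Vdashs$ and the sharp frame condition for the forward direction, and use the successor $(w,u)$ of $(w,v)$, i.e.\ the instance $w'=w$, for the converse. You are slightly more explicit than the paper, which leaves tacit both the vacuous $\bullet$-pairs and the fact that every world of $W$ is the first component of some pair in $W^{\flat}$.
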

\begin{proof}
  We use induction on the $\phi$, showcasing only the induction step for 
  $\phi = \psi \sto \chi$.
  Suppose $\mo{M}, w \Vdash^s \psi \sto \chi$.
  Suppose $(w, v) \fleq (w', v')$ and $R[(w', v')] \subseteq \llb \psi \rrb^{\mo{M}^{\flat}}$.
  Then $w \leq w'$ and $w'Rv'$, so $wRv'$ because $\mo{M}$ is a sharp model.
  Also $R[(w', v')] = \{ (u, s) \in W^{\flat} \mid v' \leq u \}$,
  so by the induction hypothesis we have $\mo{M}, v' \Vdash^s \psi$.
  The assumption that $\mo{M}, w \Vdash^s \psi \sto \chi$
  then gives $\mo{M}, v' \Vdash^s \chi$,
  and intuitionistic heredity entails $\mo{M}, u \Vdash^s \chi$ for all $u \geq v'$.
  Using induction again, this implies $R[(w', v')] \subseteq \llb \chi \rrb^{\mo{M}^{\flat}}$,
  and hence $\mo{M}^{\flat}, (w, v) \Vdash \psi \sto \chi$.
  Conversely, suppose $\mo{M}^{\flat}, (w, v) \Vdash \psi \sto \chi$
  and suppose $w R u$ and $\mo{M}, u \Vdash^s \psi$.
  Then $(w, v) \preceq (w, u)$ and by induction
  $\mc{R}[(w, u)] \subseteq \llb \psi \rrb^{\mo{M}^{\flat}}$.
  This implies $\mc{R}[(w, u)] \subseteq \llb \chi \rrb^{\mo{M}^{\flat}}$,
  and hence $\mo{M}, u \Vdash^s \chi$.
  Therefore $\mo{M}, w \Vdash^s \psi \sto \chi$.
\end{proof}

  Combining the known completeness result for $\hlcs$ with respect to sharp
  frames, the lemma and proposition above, and the fact that $\mo{M}^{\flat}$
  is a pointwise downward directed upward-flat model, gives:

\begin{proposition}
  The logic $\hlcs$ is sound and complete with respect to the class of
  pointwise downward directed flat frames.
\end{proposition}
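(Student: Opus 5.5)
The statement has two directions, soundness and completeness. I would handle each separately and reuse the results just established.

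\emph{Soundness.} Let $\mo{F}$ be a pointwise downward directed flat frame. By Lemma~\ref{lem:dicom}, $\mo{F}$ validates $\axDi$. Proposition~\ref{prop:soundness}, applied with $\Ax = \{\axDi\}$, then shows that every consecution derivable in $\hlcs = \hlcf \oplus \axDi$ is valid on $\mo{F}$.

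\emph{Completeness.} I argue by contraposition. Suppose $\Gamma \nvdash_{\axDi} \phi$. By the known completeness of $\hlcs$ for sharp models, there is a sharp model $\mo{M} = (W,\leq,R,V)$ and a world $w$ with $\mo{M}, w \Vdashs \psi$ for all $\psi \in \Gamma$ and $\mo{M}, w \not\Vdashs \phi$. I would check that the cited sharp completeness results are strong completeness, i.e.\ they cover consecutions with arbitrary $\Gamma$. If only weak completeness were available, I would fall back on finitarity: reduce to a finite $\Gamma'$ and refute $\bigwedge\Gamma' \to \phi$ instead.

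Next I pass to $\mo{M}^{\flat}$. The world $w$ is represented in $W^{\flat}$: take $(w,v)$ for any $v$ with $wRv$, or $(w,\bullet)$ if $R[w] = \emptyset$. So there is some $(w,x) \in W^{\flat}$. Proposition~\ref{prop:sharp-to-flat} gives that $(w,x)$ satisfies all of $\Gamma$ and refutes $\phi$ in $\mo{M}^{\flat}$.

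It remains to check that the underlying flat frame of $\mo{M}^{\flat}$ lies in the target class; this is the only step requiring care.
\begin{itemize}
\item $\fleq$ is a preorder, being the pullback of $\leq$ along the first projection.
\item $V^{\flat}(p)$ is an upset, because $V(p)$ is an upset of $(W,\leq)$.
\item $\mc{R}[(w,v)] = \{(u,s) \mid v \leq u\}$. This set contains an element $(v,y)$, where $y$ is an $R$-successor of $v$ or $y = \bullet$. Every member $(u,s)$ satisfies $v \leq u$, hence $(v,y) \fleq (u,s)$. So $(v,y)$ is a least element of $\mc{R}[(w,v)]$ up to the preorder, and $\mc{R}[(w,v)]$ is downward directed.
\end{itemize}
Hence $\mo{M}^{\flat}$ is based on a pointwise downward directed flat frame on which $\Gamma \seq \phi$ fails. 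This proves completeness.
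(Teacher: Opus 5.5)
Your proof is correct and follows the paper's own route: soundness via Lemma~\ref{lem:dicom} and Proposition~\ref{prop:soundness}, and completeness by pushing a sharp countermodel through $\mo{M}^{\flat}$ using Proposition~\ref{prop:sharp-to-flat}. Your finitarity fallback would only give completeness for finite $\Gamma$, but that is all the proposition claims.

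There is one small slip, which the paper's remark after the construction shares. For points $(w,\bullet)$ the image $\mc{R}[(w,\bullet)]$ is empty rather than containing some $(v,y)$, so directedness there holds only vacuously, in the pairwise sense that Lemma~\ref{lem:dicom} actually uses. That is also the only reading under which the proposition is true, since requiring nonempty images would make $\neg\Box\bot$ valid.
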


\section{Canonical models and completeness}\label{sec:canonical}

  We provide a canonical model construction relative to some set $\Sigma$
  that is closed under subformulas. This will give us, at once, the finite
  model property and strong completeness of the logic.
  We use a modification of the canonical model construction for $\lna{CK}$,
  using so-called segments. The idea behind a segment is that it encodes
  both a world of the frame (a prime theory) as well as its successors.
  We start by defining prime $\Sigma$-theories.
  Throughout this subsection, we let $\Ax$ be a consistent set of formulas,
  and $\Sigma$ denote a set of formulas that
  contains $\top$ and is closed under subformulas.

\begin{definition}
  A \emph{prime $(\Ax,\Sigma)$-theory} is a subset $\Gamma \subseteq \Sigma$ that is
  \emph{deductively closed} (i.e.~if $\phi \in \Sigma$ and
          $\Gamma \vdash_{\Ax} \phi$ then $\phi \in \Gamma$),
  \emph{consistent} (i.e.~$\Gamma \not\vdash_{\Ax} \bot$), and
  \emph{$\Sigma$-prime} (i.e.~if $\phi_1, \ldots, \phi_n \in \Sigma$
          and $\Gamma \vdash_{\Ax} \phi_1 \vee \cdots \vee \phi_n$ then $\phi_i \in \Gamma$
          for some $i \in \{ 1, \ldots, n \}$).
  Write $\Th_{\Ax,\Sigma}$ for the set of prime $(\Ax,\Sigma)$-theories.
  If $\Sigma = \Lsto$ then we omit reference to $\Sigma$ and simply write
  \emph{prime $\Ax$-theory} instead of prime $(\Ax,\Sigma)$-theory.
\end{definition}

  The Lindenbaum lemma can be proved as usual.
  We can use it to obtain prime $(\Ax, \Sigma)$-theories by taking the intersection
  of the resulting prime $\Ax$-theory with $\Sigma$.

\begin{lemma}[Lindenbaum lemma]\label{lem:lindenbaum}
  Let $\Gamma \cup \Delta \subseteq \Lsto$ and suppose $\Gamma \not\vdash_{\Ax} \Delta$.
  Then there exists a prime theory $\Gamma'$ such that
  $\Gamma \subseteq \Gamma'$ and $\Gamma' \cap \Delta = \emptyset$.
\end{lemma}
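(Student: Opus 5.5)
We will run the standard Lindenbaum argument via Zorn's lemma (or, $\Lsto$ being countable, an enumeration). Throughout we use only that $\vdash_{\Ax}$ is finitary, monotone and compositional, and has the deduction theorem (the admissible rules of the proposition above). The modal rule $\mathsf{(N_a)}$ plays no role, since it only applies to derivations from an empty set of premises.

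First, consider the family
\[
\mathcal{P} = \{ \Gamma'' \supseteq \Gamma \mid \Gamma'' \not\vdash_{\Ax} \Delta \},
\]
ordered by inclusion. It contains $\Gamma$. It is closed under unions of chains: if the union derived $\psi_1 \vee \cdots \vee \psi_m$ with $\psi_i \in \Delta$, finitarity gives a finite set of premises, all lying in some member of the chain, which would then also derive it. By Zorn's lemma we obtain a maximal $\Gamma' \in \mathcal{P}$.

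Next, verify that $\Gamma'$ is a prime theory disjoint from $\Delta$.
\begin{itemize}
\item \emph{Disjointness:} if $\psi \in \Gamma' \cap \Delta$, then $\Gamma' \vdash_{\Ax} \psi$ by $\mathsf{(El)}$, contradicting $\Gamma' \in \mathcal{P}$.
\item \emph{Consistency:} if $\Gamma' \vdash_{\Ax} \bot$, then $\bot \to \psi$ is an intuitionistic axiom instance, so $\Gamma'$ derives every formula. In particular it derives $\top \vee \top$ or a member of $\Delta$, either way contradicting $\Gamma' \in \mathcal{P}$. (If $\Delta$ is empty, $\Gamma \not\vdash_{\Ax} \Delta$ should be read as consistency, and the same argument applies.)
\item \emph{Deductive closure:} if $\Gamma' \vdash_{\Ax} \phi$, then $\Gamma' \cup \{\phi\} \vdash_{\Ax} \chi$ implies $\Gamma' \vdash_{\Ax} \chi$ by compositionality. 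So $\Gamma' \cup \{\phi\} \in \mathcal{P}$, and maximality gives $\phi \in \Gamma'$.
\end{itemize}

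The main step is primeness. Suppose $\Gamma' \vdash_{\Ax} \phi_1 \vee \cdots \vee \phi_n$ but no $\phi_i \in \Gamma'$. By maximality, each $\Gamma' \cup \{\phi_i\}$ derives some disjunction $D_i$ of members of $\Delta$. By the deduction theorem, $\Gamma' \vdash_{\Ax} \phi_i \to D_i$ for every $i$. Let $D = D_1 \vee \cdots \vee D_n$. Then $\Gamma' \vdash_{\Ax} \phi_i \to D$ for each $i$, using the intuitionistic theorems $D_i \to D$ together with $\mathsf{(MP)}$. Now apply intuitionistic disjunction elimination, i.e.\ the axiom instance $(\phi_1 \to D) \to (\cdots \to ((\phi_1 \vee \cdots \vee \phi_n) \to D))$, and $\mathsf{(MP)}$. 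This yields $\Gamma' \vdash_{\Ax} D$, where $D$ is a disjunction of members of $\Delta$, a contradiction. Hence some $\phi_i \in \Gamma'$.

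The main obstacle is only bookkeeping: all these manipulations are purely propositional and so are available from the intuitionistic axioms together with $\mathsf{(MP)}$.
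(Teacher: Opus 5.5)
Your proposal is correct and is the standard Lindenbaum construction: take a maximal extension of $\Gamma$ that derives no disjunction of members of $\Delta$, and get primeness from the deduction theorem and $\vee$-elimination. This is all the paper means by ``can be proved as usual''; it gives no further proof. One small slip: in the consistency step, the reference to deriving $\top \vee \top$ should instead be to deriving $\bot$, the empty disjunction, which is what witnesses $\Gamma' \vdash_{\Ax} \Delta$ when $\Delta = \emptyset$.
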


\begin{lemma}\label{lem:sigma-pt}
  If $\Gamma$ is a prime $\Ax$-theory, then $\Gamma \cap \Sigma$
  is a prime $(\Ax,\Sigma)$-theory.
\end{lemma}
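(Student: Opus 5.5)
We check the three defining properties of a prime $(\Ax,\Sigma)$-theory for $\Gamma \cap \Sigma$ one at a time, each time using the corresponding property of $\Gamma$ together with monotonicity of $\vdash_{\Ax}$ (the first admissible rule).

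\emph{Deductive closure.} Suppose $\phi \in \Sigma$ and $\Gamma \cap \Sigma \vdash_{\Ax} \phi$. Since $\Gamma \cap \Sigma \subseteq \Gamma$, monotonicity gives $\Gamma \vdash_{\Ax} \phi$. Because $\Gamma$ is a prime $\Ax$-theory, it is deductively closed (here $\Sigma = \Lsto$), so $\phi \in \Gamma$. As also $\phi \in \Sigma$, we get $\phi \in \Gamma \cap \Sigma$.

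\emph{Consistency.} If $\Gamma \cap \Sigma \vdash_{\Ax} \bot$, then $\Gamma \vdash_{\Ax} \bot$ by monotonicity. This contradicts the consistency of $\Gamma$.

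\emph{$\Sigma$-primeness.} Let $\phi_1, \ldots, \phi_n \in \Sigma$ with $\Gamma \cap \Sigma \vdash_{\Ax} \phi_1 \vee \cdots \vee \phi_n$. Monotonicity gives $\Gamma \vdash_{\Ax} \phi_1 \vee \cdots \vee \phi_n$. Primeness of $\Gamma$ then yields some $i$ with $\phi_i \in \Gamma$, and since $\phi_i \in \Sigma$ we have $\phi_i \in \Gamma \cap \Sigma$.

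The one step needing care is the degenerate case $n = 0$, where the empty disjunction is read as $\bot$. There the primeness condition reduces to consistency, which the second step already covers. The disjunction itself need not belong to $\Sigma$, which is why we route every derivation through $\Gamma$, where no such restriction applies. Beyond this there is no real obstacle: every property transfers downward along the inclusion $\Gamma \cap \Sigma \subseteq \Gamma$, and membership in $\Sigma$ is guaranteed by the hypotheses.
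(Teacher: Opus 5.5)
Your proof is correct: each of the three conditions (deductive closure, consistency, $\Sigma$-primeness) transfers from $\Gamma$ to $\Gamma \cap \Sigma$ by monotonicity of $\vdash_{\Ax}$, with membership in $\Sigma$ supplied by the hypotheses, and $\Gamma \cap \Sigma \subseteq \Sigma$ holds trivially. The paper states this lemma without proof as a routine fact, and your property-by-property check is exactly the argument it leaves implicit.
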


  Segments comprise of a prime $(\Ax,\Sigma)$-theory together with a suitable set
  of such theories that encodes the successors of the segment. 

\begin{definition}\label{def:can-model}
  An \emph{$(\Ax,\Sigma)$-segment} is a pair $(\Gamma, U)$ where
  $\{ \Gamma \} \cup U \subseteq \Th_{\Ax,\Sigma}$ such that
  \begin{enumerate}
    \myitem{S1} \label{it:seg-1}
          if $\Delta \in U$ and $\Delta \subseteq \Delta' \in \Th_{\Ax,\Sigma}$ 
          then $\Delta' \in U$;
    \myitem{S2} \label{it:seg-2}
          for all $\phi, \psi \in \Sigma$,
          if $\Gamma \vdash_{\Ax} \phi \sto \psi$ and $\phi \in \Delta$ for all $\Delta \in U$,
          then $\psi \in \Delta$ for all $\Delta \in U$.
  \end{enumerate}
  Let $\SEG_{\Ax,\Sigma}$ be the set of $(\Ax,\Sigma)$-segments and define relations by
  setting $(\Gamma, U) \subsetsim (\Gamma', U')$ iff $\Gamma \subseteq \Gamma'$,
  and $(\Gamma, U) \mc{R} (\Gamma', U')$ iff $\Gamma' \in U$.
  Define the (canonical) valuation by $V_{\Ax,\Sigma}(p) = \{ (\Gamma, U) \in \SEG_{\Ax,\Sigma} \mid p \in \Gamma \}$.
  Then
  \begin{equation*}
    \mo{F}_{\Ax,\Sigma} = (\SEG_{\Ax,\Sigma}, \subseteq, \mc{R})
    \quad\text{and}\quad
    \mo{M}_{\Ax,\Sigma} = (\SEG_{\Ax,\Sigma}, \subsetsim, \mc{R}, V_{\Ax,\Sigma})
  \end{equation*}
  are an upward-flat frame and model,
  called the \emph{full canonical frame} and \emph{model (with respect to $\Ax$ and $\Sigma$)}.
  If $\Sigma = \Lsto$ then we abbreviate $\SEG_{\Ax} := \SEG_{\Ax, \Lsto}$
  and $\mo{F}_{\Ax} := \mo{F}_{\Ax, \Lsto}$.
\end{definition}

  Lemmas~\ref{lem:lindenbaum} and~\ref{lem:sigma-pt} provide a way to construct
  prime $(\Ax,\Sigma)$-theories, given suitable sets of formulas.
  The following lemma allows us to extend this to a segment:

\begin{lemma}\label{lem:helper}
  Let $\Gamma$ be a prime $(\Ax,\Sigma)$-theory and $\phi \in \Sigma$, and
  define
  \begin{equation*}
    U_{\Gamma, \phi} := \{ \Delta \in \Th_{\Ax,\Sigma} \mid \text{if } \psi \in \Sigma \text{ and } \Gamma \vdash_{\Ax} \phi \sto \psi \text{ then } \psi \in \Delta \}.
  \end{equation*}
  \begin{enumerate}
    \item $(\Gamma, U_{\Gamma, \phi})$ is an $(\Ax,\Sigma)$-segment.
    \item $\phi \in \Delta$ for all $\Delta \in U_{\Gamma, \phi}$.
    \item If $\theta \in \Sigma$ is such that $\Gamma \not\vdash_{\Ax} \phi \sto \theta$,
          then there exists $\Delta \in U_{\Gamma, \phi}$ such that $\theta \notin \Delta$.
  \end{enumerate}
\end{lemma}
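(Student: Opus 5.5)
We prove the three items in the order (2), (1), (3), since (2) is used in the proof of (1).

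For item (2), note that $\phi \sto \phi$ is derivable: the consecution $\emptyset \seq \phi \to \phi$ holds, so $(\mathsf{N_a})$ gives $\Gamma \vdash_{\Ax} \phi \sto \phi$. Since $\phi \in \Sigma$, the definition of $U_{\Gamma,\phi}$ puts $\phi$ into every $\Delta \in U_{\Gamma,\phi}$.

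For item (1), we check the two segment conditions.

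Condition \eqref{it:seg-1} is immediate. Membership in $U_{\Gamma,\phi}$ only requires that certain formulas lie in $\Delta$. Any prime $(\Ax,\Sigma)$-theory $\Delta' \supseteq \Delta$ therefore also belongs to $U_{\Gamma,\phi}$.

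Condition \eqref{it:seg-2} needs a short argument. Suppose $\chi,\psi \in \Sigma$, $\Gamma \vdash_{\Ax} \chi \sto \psi$, and $\chi \in \Delta$ for all $\Delta \in U_{\Gamma,\phi}$. We want to show $\Gamma \vdash_{\Ax} \phi \sto \chi$; then $\axTr$ yields $\Gamma \vdash_{\Ax} \phi \sto \psi$, so $\psi$ lies in every member of $U_{\Gamma,\phi}$.

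To get $\Gamma \vdash_{\Ax} \phi \sto \chi$, argue by contraposition using item (3). If $\Gamma \not\vdash_{\Ax} \phi \sto \chi$, item (3) gives some $\Delta \in U_{\Gamma,\phi}$ with $\chi \notin \Delta$, contradicting the assumption. So it is cleanest to prove (3) first, with (2) in hand.

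For item (3), suppose $\Gamma \not\vdash_{\Ax} \phi \sto \theta$. Let
\[
\Psi := \{\psi \in \Sigma \mid \Gamma \vdash_{\Ax} \phi \sto \psi\}.
\]
We claim that $\Psi \not\vdash_{\Ax} \theta$. If instead $\Psi \vdash_{\Ax} \theta$, finiteness gives $\psi_1,\dots,\psi_n \in \Psi$ with $\emptyset \vdash_{\Ax} (\psi_1 \wedge \dots \wedge \psi_n) \to \theta$ (the case $n=0$ uses $\top$). From this we derive $\phi \sto \theta$ from $\Gamma$ in three steps:
\begin{enumerate}
\item $\axKa$, applied repeatedly, gives $\Gamma \vdash_{\Ax} \phi \sto (\psi_1 \wedge \dots \wedge \psi_n)$. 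For $n=0$ we use $\phi \sto \top$, which follows from $(\mathsf{N_a})$.
\item $(\mathsf{N_a})$ turns the derivable implication into $\Gamma \vdash_{\Ax} (\bigwedge \psi_i) \sto \theta$.
\item $\axTr$ combines these into $\Gamma \vdash_{\Ax} \phi \sto \theta$, a contradiction.
\end{enumerate}

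With the claim in hand, the Lindenbaum lemma (Lemma~\ref{lem:lindenbaum}) with $\Delta = \{\theta\}$ yields a prime $\Ax$-theory $\Delta_0 \supseteq \Psi$ with $\theta \notin \Delta_0$. By Lemma~\ref{lem:sigma-pt}, $\Delta_0 \cap \Sigma$ is a prime $(\Ax,\Sigma)$-theory. It contains $\Psi$, so it lies in $U_{\Gamma,\phi}$, and it omits $\theta$, as required.

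The main obstacle is this deductive closure step: showing that the set of "consequents" of $\phi$ under $\Gamma$ is closed under finite conjunction and provable implication. This is exactly where $\axKa$, $\axTr$ and $(\mathsf{N_a})$ are all needed. Once it is established, both (1) and (3) follow directly.
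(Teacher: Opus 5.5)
Your proof is correct and follows the paper's argument. Item (3) is proved the same way: compactness, the $\axKa$/$(\mathsf{N_a})$/$\axTr$ closure step, and a Lindenbaum extension intersected with $\Sigma$. For \eqref{it:seg-2} you apply (3) contrapositively to $\chi$, whereas the paper re-runs that argument inline; the content is identical. Your opening claim that (2) is needed for (1) is inaccurate, since (2) is used nowhere, but the forward reference from (1) to (3) is harmless because the proof of (3) does not depend on (1).
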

\begin{proof}
  (1) \;
  It follows immediately from the definition that $(\Gamma, U_{\Gamma, \phi})$
  satisfies~\eqref{it:seg-1}, so we focus on proving~\eqref{it:seg-2}.
  Suppose $\chi, \xi \in \Sigma$ and $\Gamma \vdash_{\Ax} \chi \sto \xi$
  and $\chi \in \Delta$ for all $\Delta \in U_{\Gamma, \phi}$.
  Then we must have
  \begin{equation*}
    \{ \psi \in \Sigma \mid \Gamma \vdash_{\Ax} \phi \sto \psi \} \vdash \chi,
  \end{equation*}
  because otherwise we could use the Lindenbaum lemma to find some
  prime $\Sigma$-theory in $U_{\Gamma, \phi}$ that does not contain $\chi$.
  (We can first use the usual Lindenbaum lemma to find a prime theory containing
  the LHS but not $\phi$, and then take its intersection with $\Sigma$.)
  By compactness, we can find $\psi_1, \ldots, \psi_n \in \Sigma$ such that
  $\Gamma \vdash_{\Ax} \phi \sto \psi_i$ for all $i \in \{ 1, \ldots, n \}$
  and $\psi_1, \ldots, \psi_n \vdash_{\Ax} \chi$.
  This implies
  \begin{equation*}
    \phi \sto \psi_1, \ldots, \phi \sto \psi_n \vdash_{\Ax} \phi \sto \chi,
  \end{equation*}
  hence using transitivity
  \begin{equation*}
    \phi \sto \psi_1, \ldots, \phi \sto \psi_n, \chi \sto \xi \vdash_{\Ax} \phi \sto \xi,.
  \end{equation*}
  Since $\Gamma$ derives everything on the LHS, we also get
  $\Gamma \vdash_{\Ax} \phi \sto \xi$, hence by definition of
  $U_{\Gamma, \phi}$ we have $\xi \in \Delta$ for all $\Delta \in U_{\Gamma, \phi}$.
  
  (2) \;
  This follows from the fact~($\mathsf{N_a}$) entails $\vdash \phi \sto \phi$
  for any $\phi \in \Lsto$. Therefore $\Gamma \vdash_{\Ax} \phi \sto \phi$ and hence
  $\phi \in \Delta$ for all $\Delta \in U_{\Gamma, \phi}$ by definition.
  
  (3) \;
  We claim that $\{ \psi \in \Sigma \mid \Gamma \vdash_{\Ax} \phi \sto \psi \} \not\vdash_{\Ax} \theta$.
  Suppose towards a contradiction that this is not the case.
  Then by compactness we can find $\psi_1, \ldots, \psi_n \in \Sigma$
  such that
  \begin{equation}\label{eq:3}
    \psi_1, \ldots, \psi_n \vdash_{\Ax} \theta
  \end{equation}
  and $\Gamma \vdash_{\Ax} \phi \sto \psi_i$ for each $i \in \{ 1, \ldots, n \}$.
  This implies $\Gamma \vdash_{\Ax} \phi \sto (\psi_1 \wedge \cdots \wedge \psi_n)$.
  Furthermore, \eqref{eq:3} entails $\vdash_{\Ax} (\psi_1 \wedge \cdots \wedge \psi_n) \to \theta$,
  so by~($\mathsf{N_a}$) we get $\vdash_{\Ax} (\psi_1 \wedge \cdots \wedge \psi_n) \sto \theta$.
  In particular, this gives $\Gamma \vdash_{\Ax} (\psi_1 \wedge \cdots \wedge \psi_n) \sto \theta$,
  so that $\axTr$ entails $\Gamma \vdash_{\Ax} \phi \sto \theta$, a contradiction.
  So we have $\{ \psi \in \Sigma \mid \Gamma \vdash_{\Ax} \phi \sto \psi \} \not\vdash_{\Ax} \theta$.
  Then Lemma~\ref{lem:lindenbaum} gives a prime $\Ax$-theory $\Delta$ containing
  $\psi$ for every $\psi \in \Sigma$ such that $\Gamma \vdash_{\Ax} \phi \sto \psi$,
  but not $\theta$. By definition $\Delta \cap \Sigma \in U_{\Gamma, \phi}$,
  so it is the desired witness.
\end{proof}

\begin{lemma}\label{lem:truth}
  For all $\phi \in \Sigma$ and $(\Gamma, U) \in \SEG_{\Ax,\Sigma}$ we have
  $\mo{M}_{\Ax,\Sigma}, (\Gamma, U) \Vdash \phi$ iff $\phi \in \Gamma$.
\end{lemma}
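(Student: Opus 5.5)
We prove the truth lemma by induction on the structure of $\phi \in \Sigma$. Since $\Sigma$ is closed under subformulas, the induction hypothesis applies to all proper subformulas. The atomic case holds by definition of $V_{\Ax,\Sigma}$. The cases $\top$, $\bot$ and $\wedge$ are immediate from deductive closure and consistency. The case $\vee$ uses $\Sigma$-primeness of $\Gamma$.

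For $\phi \to \psi$, the direction from left to right is as follows. If $\phi \to \psi \notin \Gamma$, then $\Gamma, \phi \not\vdash_{\Ax} \psi$ by the deduction rule. Lemma~\ref{lem:lindenbaum} and Lemma~\ref{lem:sigma-pt} give a prime $(\Ax,\Sigma)$-theory $\Gamma' \supseteq \Gamma$ with $\phi \in \Gamma'$ and $\psi \notin \Gamma'$. We then need a segment on top of $\Gamma'$, and the natural choice is $(\Gamma', U_{\Gamma',\top})$, which is a segment by Lemma~\ref{lem:helper}(1). This gives a $\subsetsim$-successor of $(\Gamma,U)$ refuting $\phi \to \psi$, by the induction hypothesis. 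The converse direction follows from deductive closure and the induction hypothesis.

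The main case is $\phi \sto \psi$.

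\emph{Suppose $\phi \sto \psi \in \Gamma$.} Take $(\Gamma',U')$ with $\Gamma \subseteq \Gamma'$ and assume every $\Delta \in U'$ satisfies $\phi$ at every segment $(\Delta, W)$. We must first turn this into the statement $\phi \in \Delta$ for all $\Delta \in U'$, and this needs the induction hypothesis together with the fact that every prime theory $\Delta$ is the first component of some segment, for instance $(\Delta, U_{\Delta,\top})$. The $\mc{R}$-successors of $(\Gamma',U')$ are exactly the segments with first component in $U'$. So by the induction hypothesis the assumption becomes $\phi \in \Delta$ for all $\Delta \in U'$. Since $\Gamma' \vdash_{\Ax} \phi \sto \psi$, condition \eqref{it:seg-2} yields $\psi \in \Delta$ for all $\Delta \in U'$. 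Applying the induction hypothesis again gives $\mc{R}[(\Gamma',U')] \subseteq \llb \psi \rrb$.

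\emph{Suppose $\phi \sto \psi \notin \Gamma$.} Then $\Gamma \not\vdash_{\Ax} \phi \sto \psi$, since $\Gamma$ is deductively closed and the formula lies in $\Sigma$. Consider the segment $(\Gamma, U_{\Gamma,\phi})$, which is a $\subsetsim$-successor of $(\Gamma,U)$ because the first components agree. By Lemma~\ref{lem:helper}(2) every $\Delta \in U_{\Gamma,\phi}$ contains $\phi$. By Lemma~\ref{lem:helper}(3) some $\Delta \in U_{\Gamma,\phi}$ omits $\psi$. Extending $\Delta$ to a segment $(\Delta, U_{\Delta,\top})$ and applying the induction hypothesis, all $\mc{R}$-successors of $(\Gamma, U_{\Gamma,\phi})$ satisfy $\phi$, while one of them fails $\psi$. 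Hence $(\Gamma,U) \not\Vdash \phi \sto \psi$.

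The main obstacle is the flat clause for $\sto$, which quantifies over all $\subsetsim$-successors rather than only over $(\Gamma,U)$ itself. This is why we need both \eqref{it:seg-2} for arbitrary segments and the freedom to replace $U$ by $U_{\Gamma,\phi}$ while keeping $\Gamma$ fixed. The other point needing care is that every prime $(\Ax,\Sigma)$-theory occurs as the first component of some segment, which Lemma~\ref{lem:helper}(1) supplies via $U_{\Delta,\top}$.
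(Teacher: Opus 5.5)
Your proof is correct and follows the paper's argument. You use induction on $\phi$, with $(\Gamma', U_{\Gamma',\top})$ as the witness in the $\to$ case and the segment $(\Gamma, U_{\Gamma,\phi})$ from Lemma~\ref{lem:helper} as the witness in the $\sto$ case. You also spell out the step the paper calls ``immediate from the definition of a segment'': $\phi\sto\psi$ passes to any $\Gamma'\supseteq\Gamma$, and every prime theory in $U'$ is the first component of some segment. This lets the induction hypothesis turn truth at all $\mc{R}$-successors into membership before \eqref{it:seg-2} is applied.
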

\begin{proof}
  We use induction on the structure of $\phi$.
  If $\phi$ is $\top, \bot$ or a proposition letter, the statement is immediate.
  The cases for $\wedge$ and $\vee$ follow using induction and the fact that
  $\Gamma$ is prime.
  
  \medskip\noindent
  \textit{Case $\phi = \psi \to \chi$.}
  Suppose $\psi \to \chi \in \Gamma$.
  Let $(\Gamma, U) \subsetsim (\Gamma', U')$ and suppose
  $(\Gamma', U') \Vdash \psi$. Then by definition of $\subsetsim$,
  deductive closure of prime $(\Ax,\Sigma)$-theories, and the induction hypothesis
  we find $\psi \in \Gamma'$ and $\psi \to \chi \in \Gamma'$.
  This implies $\chi \in \Gamma'$, hence by induction $(\Gamma', U') \Vdash \chi$.
  This proves $(\Gamma, U) \Vdash \psi \to \chi$.
  
  Conversely, suppose $\psi \to \chi \notin \Gamma$.
  Then $\Gamma \not\vdash_{\Ax} \psi \to \chi$, so $\Gamma, \psi \not\vdash_{\Ax} \chi$
  and we can find a prime theory $\Gamma'$ containing $\Gamma, \psi$ but not $\chi$.
  Then $\Gamma' \cap \Sigma$ is a prime $\Sigma$-theory
  and we can extend it to an $(\Ax,\Sigma)$-segment
  (for example by using Lemma~\ref{lem:helper} with $\phi = \top$)
  which (using induction) satisfies $\psi$ but not $\chi$.
  Therefore $(\Gamma, U) \not\Vdash \psi \to \chi$.
  
  \medskip\noindent
  \textit{Case $\phi = \psi \sto \chi$.}
    If $\psi \sto \chi \in \Gamma$ then we get $(\Gamma, U) \Vdash \psi \sto \chi$
    immediately from the definition of a segment.
    Now suppose $\psi \sto \chi \notin \Gamma$.
    By Lemma~\ref{lem:helper}
    $(\Gamma, U_{\Gamma, \psi})$ is a $(\Ax,\Sigma)$-segment such that
    $\psi \in \Delta$ for all $\Delta \in U_{\Gamma, \psi}$ while
    $\chi \notin \Delta$ for some $\Delta \in U_{\Gamma, \psi}$.
    Since each $\Delta$ can be extended to a segment,
    this proves $(\Gamma, U) \not\Vdash \psi \sto \chi$.
\end{proof}

  Depending on our choice of $\Ax$ and $\Sigma$, the canonical model construction
  gives rise to a finite model property and a strong completeness result.

\begin{lemma}\label{lem:fmp-comp}
  Let $\Ax$ be a set of axioms.
  \begin{enumerate}
    \item \label{it:fmp}
          Suppose that for every finite consecution $\Gamma \seq \phi$ there exists a
          finite subformula-closed set $\Sigma$ that contains $\Gamma, \phi$ and $\top$
          such that $\mo{F}_{\Ax,\Sigma}$ validates $\Ax$.
          Then $\hlcf \oplus \Ax$ has the finite model property.
    \item \label{it:comp}
          Suppose $\mo{F}_{\Ax}$ validates $\Ax$.
          Then $\hlcf \oplus \Ax$ is strongly complete with respect to the class of
          (upward-)flat frames validating $\Ax$.
  \end{enumerate}
\end{lemma}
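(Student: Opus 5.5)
Both parts follow the same pattern: build a countermodel from the canonical model of Definition~\ref{def:can-model} and use the truth lemma (Lemma~\ref{lem:truth}).

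\emph{Part~(\ref{it:fmp}).} Fix a finite consecution $\Gamma \seq \phi$ with $\Gamma \not\vdash_{\Ax} \phi$, and let $\Sigma$ be the finite subformula-closed set given by the hypothesis. By the Lindenbaum lemma (Lemma~\ref{lem:lindenbaum}) there is a prime $\Ax$-theory $\Gamma'$ with $\Gamma \subseteq \Gamma'$ and $\phi \notin \Gamma'$. By Lemma~\ref{lem:sigma-pt}, $\Gamma'' := \Gamma' \cap \Sigma$ is a prime $(\Ax,\Sigma)$-theory; it contains $\Gamma$ and omits $\phi$. Lemma~\ref{lem:helper}(1) with the formula $\top$ makes $(\Gamma'', U_{\Gamma'',\top})$ a segment. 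Lemma~\ref{lem:truth} then shows that this segment satisfies every formula of $\Gamma$ but not $\phi$ in $\mo{M}_{\Ax,\Sigma}$.

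The underlying frame $\mo{F}_{\Ax,\Sigma}$ validates $\Ax$ by hypothesis, so it is a frame for $\hlcf \oplus \Ax$ refuting $\Gamma \seq \phi$. It remains to check finiteness. Since $\Sigma$ is finite, $\Th_{\Ax,\Sigma} \subseteq \mathcal{P}(\Sigma)$ is finite, so there are at most $2^{|\Sigma|} \cdot 2^{2^{|\Sigma|}}$ segments. Combined with soundness (Proposition~\ref{prop:soundness}), this gives the finite model property.

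\emph{Part~(\ref{it:comp}).} The argument is the same with $\Sigma = \Lsto$, except that $\Gamma$ may now be infinite. Suppose $\Gamma \not\vdash_{\Ax} \phi$. Lindenbaum gives a prime $\Ax$-theory $\Gamma' \supseteq \Gamma$ with $\phi \notin \Gamma'$, which we extend to the segment $(\Gamma', U_{\Gamma',\top})$. By Lemma~\ref{lem:truth}, this world of $\mo{M}_{\Ax}$ satisfies all of $\Gamma$ but not $\phi$. Since $\mo{F}_{\Ax}$ validates $\Ax$, we get $\Gamma \not\Vdash_{\Ax} \phi$. Together with Proposition~\ref{prop:soundness} this is strong completeness. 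The statement allows either ``flat'' or ``upward-flat'' frames, so I also need $\mo{F}_{\Ax}$ to be upward-flat.

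I expect the only real obstacle to be confirming that the canonical frame is indeed upward-flat. Suppose $(\Gamma,U)\,\mc{R}\,(\Delta,W) \subsetsim (\Delta',W')$. Then $\Delta \in U$ and $\Delta \subseteq \Delta'$, so $\Delta' \in U$ by~\eqref{it:seg-1}, hence $(\Gamma,U)\,\mc{R}\,(\Delta',W')$. A smaller check is that the segment relation $\subsetsim$ is a preorder rather than a partial order, which the semantics permits. Everything else is the routine bookkeeping above.
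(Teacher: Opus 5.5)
Your proposal is correct and follows the paper's proof essentially step for step. Both use Lindenbaum plus restriction to $\Sigma$, extend to a segment (you make the choice $U_{\Gamma'',\top}$ explicit), apply the truth lemma and the $2^{|\Sigma|}\cdot 2^{2^{|\Sigma|}}$ counting bound, and take part (2) as the same argument with $\Sigma=\Lsto$. Your check that \eqref{it:seg-1} makes $\mc{R}$ closed under post-composition with $\subsetsim$ simply supplies the upward-flatness that the paper asserts without proof in Definition~\ref{def:can-model}.
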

\begin{proof}
  (1) \; 
  Let $\Gamma \cup \{ \phi \}$ be a finite set of formulas and suppose
  $\Gamma \not\vdash_{\Ax} \phi$. Let $\Sigma$ be as described.
  Then we can use Lemmas~\ref{lem:lindenbaum} and~\ref{lem:sigma-pt} to construct
  a prime $\Sigma$-theory $\Gamma'$ extending $\Gamma$ that does not contain $\phi$.
  Let $(\Gamma', U)$ be a segment in $\SEG_{\Ax,\Sigma}$.
  It follows from Lemma~\ref{lem:truth} that that
  $\mo{M}_{\Sigma}, (\Gamma', U) \Vdash \psi$ for all $\psi \in \Gamma$
  and $\mo{M}_{\Sigma}, (\Gamma', U) \not\Vdash \phi$.
  So $\mo{M}_{\Sigma} \not\Vdash \Gamma \seq \phi$,
  hence $\mo{F}_{\Ax,\Sigma}$ is a flat frame that does not validate $\Gamma \seq \phi$.

  By assumption $\Sigma$ is finite.
  A prime $\Sigma$-theory is a subset of $\Sigma$, so we have at most
  $2^{|\Sigma|}$ many prime theories, and hence at most
  $2^{|\Sigma|} \times 2^{2^{|\Sigma|}}$ $\Sigma$-segments,
  where $|\Sigma|$ denotes the size of $\Sigma$.
  Hence $\mo{F}_{\Sigma}$ is finite.
  
  (2) \;
  The proof is identical to the first paragraph of item~(1) with $\Sigma = \Lsto$.
\end{proof}

  Taking $\Ax = \emptyset$ and $\Sigma$ the closure of $\Gamma \cup \{ \phi, \top \}$
  under subformulas yields:
  
\begin{theorem}
  The logic $\hlcf$ has the finite model property and is strongly complete
  with respect to the class of (upward-)flat frames.
\end{theorem}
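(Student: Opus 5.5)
The theorem follows from Lemma~\ref{lem:fmp-comp} with $\Ax = \emptyset$. The only thing to check is the hypothesis of that lemma: the canonical frames validate the empty set of axioms, which holds vacuously. So the work splits into two instantiations, together with a soundness remark turning the two halves into ``completeness with respect to the class of (upward-)flat frames''.

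For the finite model property, I take a finite consecution $\Gamma \seq \phi$ with $\Gamma \not\vdash \phi$ and let $\Sigma$ be the closure of $\Gamma \cup \{\phi, \top\}$ under subformulas. This set is finite and subformula-closed, and $\mo{F}_{\emptyset,\Sigma}$ trivially validates $\Ax = \emptyset$, so Lemma~\ref{lem:fmp-comp}(\ref{it:fmp}) applies.

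One point needs care before applying the lemma: there must be a segment whose first component is the prime theory $\Gamma'$ produced by Lindenbaum. Lemma~\ref{lem:helper} with $\phi=\top$ gives one, namely $(\Gamma', U_{\Gamma',\top})$. I also have to confirm that $\SEG_{\emptyset,\Sigma}$ is nonempty (so the frame is a legitimate flat frame) and finite. Nonemptiness comes from the same segment. For finiteness, $\Sigma$-segments are pairs consisting of a subset of $\Sigma$ and a set of such subsets, so there are at most $2^{|\Sigma|}\cdot 2^{2^{|\Sigma|}}$ of them. Since $\mo{F}_{\emptyset,\Sigma}$ is upward-flat by construction (the remark in Definition~\ref{def:can-model}), the countermodel is a finite upward-flat frame.

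For strong completeness, I apply Lemma~\ref{lem:fmp-comp}(\ref{it:comp}) with $\Sigma = \Lsto$, again with its hypothesis satisfied vacuously. Given arbitrary $\Gamma \not\vdash \phi$, Lemma~\ref{lem:lindenbaum} yields a prime theory extending $\Gamma$ and omitting $\phi$. This extends to a segment, and Lemma~\ref{lem:truth} refutes the consecution there. Combined with soundness (Proposition~\ref{prop:soundness}), this gives $\Gamma \vdash \phi$ iff $\Gamma \Vdash_{\emptyset} \phi$.

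This holds both over all flat frames and over upward-flat ones. Upward-flat frames form a subclass, so soundness for flat frames covers them, while the canonical countermodels are already upward-flat; Proposition~\ref{prop:upsame} is an alternative route. There is no real obstacle. The only subtle point is the statement's ``flat frames'' versus ``upward-flat frames'', which is handled by the canonical frames being upward-flat together with soundness over the larger class.
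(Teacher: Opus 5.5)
Your proposal is correct and is essentially the paper's argument. The theorem is Lemma~\ref{lem:fmp-comp} instantiated with $\Ax = \emptyset$ (so its hypotheses hold vacuously), with $\Sigma$ the subformula closure of $\Gamma \cup \{\phi, \top\}$ for the finite model property and $\Sigma = \Lsto$ for strong completeness. The details you add are exactly the points the paper leaves implicit: producing the segment $(\Gamma', U_{\Gamma',\top})$ via Lemma~\ref{lem:helper}, and covering both flat and upward-flat frames by combining soundness (Proposition~\ref{prop:soundness}) with the upward-flatness of the canonical frame, which follows from \eqref{it:seg-1}.
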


\begin{remark} \label{rem:notall}
  While taking the collection of all $(\Ax, \Sigma)$-segments
  in Definition~\ref{def:can-model} provides a canonical choice of segments,
  it is not strictly necessary. Analogous to~\cite{groshiclo25}, we can restrict
  the shape of segments we use while maintaining the truth lemma and completeness
  result. This can help create a canonical model that satisfies additional
  constraints.
  We will see an example of a restriction in Section~\ref{subsec:alt-can-mod},
  where we use this strategy to ensure that the modal accessibility relation
  is transitive when having $\foura$ as an axiom.
\end{remark}

\begin{remark} \label{rem:nogmt}
  A different method for obtaining completeness results, employed for
  instance for $\hlcs$~\cite{grolitpat26-arxiv} and intuitionistic modal logic
  with a $\Box$~\cite{wolterz97:al,wolterz98:lw}, is via a G{\"o}del-McKinsey-Tarski
  translation into classical bimodal logic with an S4-box $\Box_i$ and
  a normal box $\Box_m$.
  Our case seems amenable to this treatment: flat frames corresponds precisely
  to the semantics of $\lna{S4 \oplus K}$, and the interpretation of
  $\phi \sto \psi$ is given by $\Box_i(\Box_m\phi \to \Box_m\psi)$.
  However, there is a mismatch between the descriptive frames of both
  logics: a duality for $\hlcf$ would resemble that for
  $\lna{CK}$~\cite{groshiclo26} and use segments.
  As a consequence it does not seem to be the case that the two types
  of descriptive frames line up. This frustrates the transfer of e.g.~completeness.
\end{remark}

\section{Completeness and the fmp for axiomatic extensions}
\label{sec:extensions}
  
  We investigate the extension of $\hlcf$ with the axioms listed in
  Table~\ref{tab:axioms}  and the given correspondence conditions proven in
  Lemma~\ref{lem:corr-1} and Proposition~\ref{prop:fourcor}. 
  We start by using Lemma~\ref{lem:fmp-comp} to obtain
  completeness and the finite model property for certain extensions of
  $\hlcf$ with the listed axioms.
  In Section~\ref{subsec:alt-can-mod} we modify this canonical model construction
  to obtain completeness for extensions that include $\foura$, and to obtain the
  finite model property for $\hlcf \oplus \axTb \oplus \foura$.

\begin{table}[h!]
  \centering
  \begin{tabular}{lll}
    \toprule
      Axiom & Formula & Upward-flat correspondent \\ \midrule
      $\axEm$ & $p \vee \neg p$ & $\fleq$ is symmetric \\
      $\axTb$ & $(\top \sto p) \to p$ & $({\fleq} \circ R)$ is reflexive \\
      $\foura$ & $p \sto (\top \sto p)$ & $R$ is transitive \\
      $\strength$ & $(p \to q) \to (p \sto q)$ & $wRv$ implies $w \fleq v$ \\
      $\pa$ & $(p \sto q) \to (\top \sto (p \sto q))$ & if $wRvRs$ then there exists $u \fgeq w$ \\
      && such that $uRs$ and $R[u] \subseteq R[v]$ \\ \bottomrule
  \end{tabular}
  \caption{Five axioms and their correspondents for upward-flat frames $\mo{F} = (W, \fleq, R)$.}
  \label{tab:axioms}
\end{table}

\begin{remark} \label{rem:axmeaning}
Both $\foura$ and $\pa$ often occur in arithmetical contexts. It is worth noting that while $\foura$ is the ``flat'' correspondent of transitivity, $\pa$ is the ``sharp'' one \cite{litvis24}. While $\strength$ is a rather degenerate axiom classically (cf. Remark \ref{rem:degstr}), intuitionistically it plays an important role, occurring in the logics of Haskell arrows~\cite{Hug04}, guarded (co)recursion, and entailments~\cite[Section~2.4]{grolitpat26-arxiv}, and even allows a non-trivial arithmetical interpretation as \emph{completeness principle}.
\end{remark}

\begin{lemma}\label{lem:corr-1}
  Let $\mo{F} = (W, \fleq, R)$ be an upward-flat frame. Then
  \begin{enumerate}
    \item \label{it:corr-em}
          $\mo{F}$ validates $\axEm$ if and only if $\fleq$ is symmetric;
    \item \label{it:corr-tb}
          $\mo{F}$ validates $\axTb$ if and only if for all $w$ there exists $v$
          such that $w \fleq v R w$;
    \item \label{it:corr-str}
          $\mo{F}$ validates $\strength$ if and only if $w R v$ implies $w \fleq v$;
    \item \label{it:corr-pa}
          $\mo{F}$ validates $\pa$ if and only if for all $w, v, s$ satisfying
          $w R v R s$ there exists $u \fgeq w$ such that $s \in R[u]$ and $R[u] \subseteq R[v]$.
  \end{enumerate}
\end{lemma}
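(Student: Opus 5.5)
Each item is a standard correspondence argument. In every case the soundness direction (frame condition $\Rightarrow$ validity) is checked by unfolding the clause for $\sto$, namely $w \Vdash \phi \sto \psi$ iff for all $w' \fgeq w$, $R[w'] \subseteq \llb\phi\rrb$ implies $R[w'] \subseteq \llb\psi\rrb$. The converse direction takes a minimal upset valuation. Upward-flatness matters because each $R[w]$ is then an upset, so $V(p) = R[w]$ (or $R[u]$) is an admissible valuation; this is exactly the trick already used in Proposition~\ref{prop:fourcor}(2).

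\emph{Items (1) and (3).} Item~(1) is the usual intuitionistic argument, since $\sto$ plays no role. If $\fleq$ is symmetric, every upset is also a downset, so $w \notin V(p)$ forces $w \Vdash \neg p$. Conversely, if $w \fleq v$ but $v \not\fleq w$, put $V(p) = {\uparrow}v$; then $w \not\Vdash p$, and $w \not\Vdash \neg p$ because $v \fgeq w$ satisfies $p$. For item~(3), suppose first that $wRv$ implies $w \fleq v$. Let $w \Vdash p \to q$, take $w' \fgeq w$ with $R[w'] \subseteq V(p)$, and let $v \in R[w']$. Then $w \fleq w' \fleq v$, so $v \Vdash p \to q$ and hence $v \Vdash q$. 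Conversely, put $V(p) = R[w]$ (an upset) and $V(q) = {\uparrow}w$. Then $w \Vdash p \to q$ holds vacuously at points above $w$, because $V(q)$ already contains all of them. So $w \Vdash p \sto q$, and taking $w' = w$ gives $R[w] \subseteq {\uparrow}w$.

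\emph{Item (2).} Assume that for every $w$ there is $v$ with $w \fleq vRw$. If $w \Vdash \top \sto p$, then since $v \fgeq w$ and $R[v] \subseteq \llb\top\rrb$ trivially, we get $R[v] \subseteq V(p)$, so $w \in V(p)$. Conversely, given $w$, set $V(p) = \bigcup_{v \fgeq w} R[v] = R[{\uparrow}w]$. This is an upset, and by construction $w \Vdash \top \sto p$. Validity of $\axTb$ then gives $w \in V(p)$, which is exactly the required $v$.

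\emph{Item (4)} is the main obstacle, because nested $\sto$ gives the formula a $\forall\exists$ shape. For soundness, assume the frame condition and $w \Vdash p \sto q$. We must show $w \Vdash \top \sto (p \sto q)$. Since $\fleq$ is transitive, it suffices to show that for every $w_1 \fgeq w$ and every $v \in R[w_1]$ we have $v \Vdash p \sto q$. So take $v' \fgeq v$ with $R[v'] \subseteq V(p)$ and $s \in R[v']$. Upward-flatness gives $w_1 R v'$, so the condition applied to $w_1 R v' R s$ yields $u \fgeq w_1 \fgeq w$ with $s \in R[u] \subseteq R[v'] \subseteq V(p)$. From $w \Vdash p \sto q$ we then get $R[u] \subseteq V(q)$, so $s \in V(q)$. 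For the converse, suppose $wRvRs$ and set $V(p) = R[v]$ and $V(q) = \bigcup\{R[u] : u \fgeq w,\ R[u] \subseteq R[v]\}$. Both are upsets, and $w \Vdash p \sto q$ because each $u \fgeq w$ with $R[u] \subseteq V(p)$ contributes its own $R[u]$ to $V(q)$. By validity of $\pa$, $w \Vdash \top \sto (p \sto q)$, and hence, since $R[w] \subseteq \llb\top\rrb$, $v \Vdash p \sto q$. Applying this at $v$ itself with $R[v] = V(p)$ gives $s \in V(q)$, which is precisely the witnessing $u$. The point to check is that the union defining $V(q)$ is an upset, which holds because each $R[u]$ is an upset by upward-flatness.
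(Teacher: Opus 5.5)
Your proof is correct and follows the paper's argument essentially item by item. In (1)--(3) you use the same valuations, phrasing the converse of (3) directly rather than contrapositively, and in the soundness half of (4) you use the same unfolding via upward-flatness. The only real deviation is the converse of (4). The paper argues contrapositively with $V(q) = R[v] \setminus {\downarrow}s$. Your minimal valuation $V(q) = \bigcup\{R[u] \mid u \fgeq w,\ R[u] \subseteq R[v]\}$ makes $w \Vdash p \sto q$ hold by construction. It therefore avoids the case $R[u] \subseteq R[v]$, $s \notin R[u]$, which in the paper's version silently needs the observation that $R[u]$ is an upset.
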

\begin{proof}
  \eqref{it:corr-em} \;
  Suppose $\fleq$ is symmetric.
  Let $V$ be any valuation and suppose $w \in W$ does not satisfy $p$.
  Then for all $v \fgeq w$ we have $v \fleq w$ by symmetry, so $v \not\Vdash p$.
  This proves $w \Vdash \neg p$. Therefore $\axEm$ is valid.
  For the converse, suppose the frame condition does not hold,
  so there exist $v, w$ such that $w \fleq v$ and $v \not\fleq w$.
  Let $V$ be a valuation such that $V(p) = {\uparrow}v$.
  Then $w \not\Vdash p$ because $w \notin V(p)$ and $w \not\Vdash \neg p$ because
  $w \fleq v \Vdash p$, so $\axEm$ fails.

  \eqref{it:corr-tb}
  Suppose the frame condition holds and let $V$ be any valuation.
  If $w \Vdash \top \sto p$ then for all $v \fgeq w$ we have $R[v] \subseteq p$.
  By assumption there exists such a $v$ such that $w \in R[v]$, hence $w \Vdash p$.
  Therefore $\axTb$ is valid.
  Conversely, suppose $\axTb$ is valid.
  Let $w$ be any world. Let $V$ be a valuation such that
  $V(p) = \bigcup \{ R[v] \mid v \fgeq w \}$.
  Then $w \Vdash \top \sto p$, hence $w \Vdash p$,
  so we must have $w \in R[v]$ for some $v \fgeq w$, as desired.
  
  \eqref{it:corr-str} \;
  Suppose $R \subseteq {\fleq}$, let $V$ be any valuation, and $w \Vdash p \to q$.
  If $w \fleq v$ and $R[v] \subseteq V(p)$ then by assumption $w \fleq u$ for
  all $u \in R[v]$, hence $u \Vdash q$ for all such $u$, so that $R[v] \subseteq V(q)$.
  This proves $w \Vdash p \sto q$, so $\strength$ is valid.
  For the converse, suppose the frame condition does not hold.
  Then we can find $w, v \in W$ such that $w R v$ while $w \not\fleq v$.
  Let $V$ be a valuation such that $V(p) = R[w]$ and $V(q) = {\uparrow}w$.
  (Recall that $R[w]$ is upwards closed in upward-flat frames.)
  Then $w$ trivially satisfies $p \to q$, but $w \not\Vdash p \sto q$
  because all modal successors of $w$ satisfy $p$, but not all of them
  satisfy $q$ (namely $v$ does not satisfy $q$).
  
  \eqref{it:corr-pa} \;
  Suppose the frame condition holds, and let $V$ be any valuation.
  Suppose $w \Vdash p \sto q$.
  To show that $w \Vdash \top \sto (p \sto q)$, we need to prove that
  $w \fleq w' R v$ implies $v \Vdash p \sto q$.
  To this end, let $v' \fgeq v$ and assume $R[v'] \subseteq V(p)$.
  Then because the frame is upward-flat we have $w' R v'$.
  Now let $s \in R[v']$. Then by assumption 
  there exists some $u \fgeq w'$ such that $s \in R[u] \subseteq R[v']$
  Since $w \Vdash p \sto q$ and $R[u] \subseteq V(p)$ we find $s \Vdash q$. 
  This entails that $R[v'] \subseteq V(q)$,
  so $v \Vdash p \sto q$, as desired.
 
  Conversely, if the frame condition does not hold then we can find
  $w, v, s$ such that $wRvRs$ and for all $u \fgeq w$ either $R[u] \not\subseteq R[v]$
  or $s \notin R[u]$.
  Taking $V(p) = R[v]$ and $V(q) = R[v] \setminus {\downarrow}s$ then gives
  $w \Vdash p \sto q$, because $R[u] \not\subseteq V(p)$ for all $u \fgeq w$,
  while $v \not\Vdash p \sto q$, so $w \not\Vdash \top \sto (p \sto q)$.
\end{proof}

\subsection{Reusing the full canonical model} \label{sec:reuse}

  We begin by focussing on $\axEm, \axTb, \strength$ and $\pa$.
  Towards proving completeness and the finite model property for some extensions
  of $\hlcf$ with these axioms, we give
  conditions on $\Sigma$ that guarantee that the canonical frame $\mo{F}_{\Ax, \Sigma}$
  satisfies the correspondence conditions derived in Lemma~\ref{lem:corr-1}.
  To this end, we use the following definition of single negations:
  if $\phi$ is a formula then its \emph{single negation} $\sneg\phi$ is defined
  as $\sneg\phi = \psi$ if $\phi = \neg\psi$ for some $\psi \in \Lsto$,
  and $\sneg\phi = \neg\phi$ otherwise.
  We say that a set $\Sigma$ is closed under single negations if
  $\phi \in \Sigma$ implies $\sneg\phi \in \Sigma$.

\begin{lemma}\label{lem:helper-1}
  Let $\Ax$ be a set of axioms, $\Sigma \subseteq \Lsto$ a set of formulas that
  is closed under subformulas, and
  $\mo{F}_{\Ax, \Sigma} = (\SEG_{\Ax, \Sigma}, \subsetsim, \mc{R})$
  the canonical frame generated by $\Ax$ and $\Sigma$.
  \begin{enumerate}
    \item If $\Sigma$ is closed under single negations and
          $\axEm \in \Ax$, then $\subsetsim$ is symmetric.
    \item \label{it:axTb-canon}
          If $\axTb \in \Ax$ then for all $(\Gamma, U) \in \SEG_{\Ax, \Sigma}$
          there exists $(\Delta, D)$ such that $(\Gamma, U) \subsetsim (\Delta, D) \mc{R} (\Gamma, U)$.
    \item If $\strength \in \Ax$ then $(\Gamma, U) \mc{R} (\Gamma', U')$
          implies $(\Gamma, U) \subsetsim (\Gamma', U')$
    \item If $\Sigma = \Lsto$ and $\pa \in \Ax$ then $\mo{F}_{\Ax}$ satisfies
          the correspondence condition for $\pa$.
  \end{enumerate}
\end{lemma}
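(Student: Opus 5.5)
Each of the four items is verified directly on the segment frame, using the segment conditions \eqref{it:seg-1}--\eqref{it:seg-2}, Lemma~\ref{lem:helper}, and the Lindenbaum lemma.

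\emph{Item (1).} Suppose $(\Gamma,U)\subsetsim(\Gamma',U')$, i.e.\ $\Gamma\subseteq\Gamma'$; we show $\Gamma'\subseteq\Gamma$. Let $\phi\in\Gamma'$ and assume $\phi\notin\Gamma$. Since $\sneg\phi\in\Sigma$ and $\Gamma\vdash_{\Ax}\phi\vee\sneg\phi$ (an instance of $\axEm$, or of double-negation elimination when $\phi=\neg\psi$), primeness gives $\sneg\phi\in\Gamma\subseteq\Gamma'$. Then $\Gamma'$ contains both $\phi$ and $\sneg\phi$, so it is inconsistent, which is a contradiction. Symmetry concerns only first components, so this settles (1).

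\emph{Item (2).} Given $(\Gamma,U)$, I will take $\Delta:=\{\psi\in\Sigma\mid \Gamma\vdash_{\Ax}\psi\}=\Gamma$ and $D:=\{\Theta\in\Th_{\Ax,\Sigma}\mid \Gamma\subseteq\Theta\}$. Then $(\Gamma,U)\subsetsim(\Gamma,D)$ and $\Gamma\in D$, so $(\Gamma,D)\,\mc{R}\,(\Gamma,U)$. Condition \eqref{it:seg-1} holds trivially. For \eqref{it:seg-2}, suppose $\Gamma\vdash\phi\sto\psi$ and $\phi\in\Theta$ for all $\Theta\in D$. By the Lindenbaum lemma $\Gamma\vdash\phi$, and then every $\Theta\in D$ would contain $\psi$ provided $\Gamma\vdash\psi$. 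This is the delicate step: from $\Gamma\vdash\phi\sto\psi$ and $\Gamma\vdash\phi$ we need $\Gamma\vdash\psi$. I would obtain it as follows. From $\vdash\phi\to(\top\to\phi)$ and $(\mathsf{N_a})$ we get $\vdash\top\sto\phi$ whenever $\phi$ is a theorem, but here $\phi$ is only derivable from $\Gamma$. So the plan is to show $D$'s segment condition using $\Box$: the choice of $D$ as all extensions of $\Gamma$ may fail \eqref{it:seg-2}. The safer choice is $D:=U_{\Gamma,\top}$ from Lemma~\ref{lem:helper}, which is a segment. It then remains to check $\Gamma\in U_{\Gamma,\top}$: if $\Gamma\vdash\top\sto\psi$ then by $\axTb$, $\Gamma\vdash\psi$, so $\psi\in\Gamma$. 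This works cleanly, so I will use this choice.

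\emph{Item (3).} If $\Gamma'\in U$ and $\phi\in\Gamma$, then $\strength$ with $p=\top$ gives $\Gamma\vdash\top\sto\phi$. Since $\top\in\Delta$ for every $\Delta\in U$, condition \eqref{it:seg-2} puts $\phi$ in every member of $U$, in particular in $\Gamma'$. Hence $\Gamma\subseteq\Gamma'$.

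\emph{Item (4).} This is the main obstacle. Suppose $(\Gamma,U)\mc{R}(\Gamma',U')\mc{R}(\Gamma'',U'')$. We need $(\Theta,W)$ with $\Gamma\subseteq\Theta$, $\Gamma''\in W$, and $W\subseteq U'$; the last inclusion gives $\mc{R}[(\Theta,W)]\subseteq\mc{R}[(\Gamma',U')]$, and by \eqref{it:seg-1} the upward closure is automatic. I would take $\Theta=\Gamma$ and $W:=\{\Xi\in U'\mid \Gamma''\subseteq\Xi\}$. Condition \eqref{it:seg-1} holds because $U'$ is upward closed. For \eqref{it:seg-2}, suppose $\Gamma\vdash\phi\sto\psi$ and $\phi\in\Xi$ for all $\Xi\in W$. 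By $\pa$, $\Gamma\vdash\top\sto(\phi\sto\psi)$, and since $\Gamma'\in U$ we get $\phi\sto\psi\in\Gamma'$ via \eqref{it:seg-2} (this uses $\Sigma=\Lsto$). The difficulty is that $\phi$ need not lie in all of $U'$, only in those members extending $\Gamma''$. If this restricted version fails, I would instead build $W$ as a Lemma~\ref{lem:helper}-style set relative to a formula chosen by a compactness/Lindenbaum argument over $\Gamma''$, using the full language to find a single $\chi\in\Gamma''$ with $\Gamma'\vdash\chi\sto\psi$-type consequences. I expect the real work of the proof to lie in this step.
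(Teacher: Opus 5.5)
Items (1)--(3) are correct and follow the paper's proof. In (2) you were right to discard the first candidate: without $\strength$ you cannot get from $\phi$ and $\phi\sto\psi$ to $\psi$. Your final witness $(\Gamma,U_{\Gamma,\top})$ is exactly the paper's.

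The gap is in item (4): the witness you chose does not work, and the fallback you sketch is not an argument. Your set $W=\{\Xi\in U'\mid\Gamma''\subseteq\Xi\}$ is, by \eqref{it:seg-1}, just the set of all prime theories extending $\Gamma''$. So "$\phi$ lies in every member of $W$" only tells you $\phi\in\Gamma''$. Nothing then forces $\psi\in\Gamma''$. The fact $\phi\sto\psi\in\Gamma'$ constrains $U'$ only as a whole, and $U'$ may contain theories that omit $\phi$. Hence $(\Gamma,W)$ is in general not a segment.

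The missing observation is that no shrinking is needed. The correspondence condition asks only for $\mc{R}[u]\subseteq\mc{R}[v]$, so take $W:=U'$ itself, i.e.\ $u=(\Gamma,U')$.
\begin{itemize}
\item Condition \eqref{it:seg-1} is inherited from the segment $(\Gamma',U')$.
\item For \eqref{it:seg-2} you have already done the work. From $\Gamma\vdash_{\Ax}\phi\sto\psi$ you derived, via $\pa$ and $\Gamma'\in U$, that $\phi\sto\psi\in\Gamma'$. Condition \eqref{it:seg-2} for the segment $(\Gamma',U')$ then says exactly that if $\phi$ lies in every member of $U'$, so does $\psi$.
\end{itemize}
With this choice, $\Gamma\subseteq\Gamma$, $\Gamma''\in U'$, and $\mc{R}[(\Gamma,U')]=\mc{R}[(\Gamma',U')]$, which gives the correspondence condition. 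This is precisely the paper's argument.
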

\begin{proof}
  (1) \;
  Suppose $(\Gamma, U) \subsetsim (\Gamma', U')$.
  Then $\phi \in \Gamma'$ implies $\sneg\phi \notin \Gamma'$.
  Since $\Gamma \subseteq \Gamma'$ this gives $\sneg\phi \notin \Gamma$,
  hence $\phi \in \Gamma$.
  
  (2) \; 
  We can take $(\Delta, D) = (\Gamma, U_{\Gamma,\top})$.
  Then $(\Gamma, U) \subsetsim (\Gamma, U_{\Gamma,\top})$,
  and it follows from $\axTb$ that $\Gamma \in U_{\Gamma,\top}$.
  
  (3) \; 
  Suppose $(\Gamma, U) \mc{R} (\Gamma', U')$.
  Then $\phi \in \Gamma$ implies $\Gamma \vdash_{\Ax} \top \to \phi$,
  hence using $\strength$ we get $\Gamma \vdash_{\Ax} \top \sto \phi$.
  By definition of an $(\Ax, \Sigma)$-segment,
  this implies that $\phi \in \Delta$ for all $\Delta \in U$.
  It follows that $\Gamma \subseteq \Delta$ for all $\Delta \in U$.
  In particular, this implies $\Gamma \subseteq \Gamma'$,
  hence $(\Gamma, U) \subsetsim (\Gamma', U')$.
  
  (4) \;
  Suppose $(\Gamma, U) \mc{R} (\Delta, D)$.
  Then $\phi \sto \psi \in \Gamma$ implies
  $\top \sto (\phi \sto \psi) \in \Gamma$, so that $\phi \sto \psi \in \Delta$.
  It follows that $(\Gamma, D)$ is a segment.
  This implies the correspondence condition, because for any $s$ is the correspondence condition
  we can take $u = (\Gamma, D)$.
\end{proof}

\begin{theorem}
  Let $\Ax \subseteq \{ \axEm, \axTb, \strength, \pa \}$.
  Then $\hlcf \oplus \Ax$ is sound and strongly complete with respect
  to the class of (upward-)flat frames on which they are valid.
\end{theorem}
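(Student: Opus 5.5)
Soundness is already given by Proposition~\ref{prop:soundness}, so the plan is to establish strong completeness via Lemma~\ref{lem:fmp-comp}\eqref{it:comp}: it suffices to show that the full canonical frame $\mo{F}_{\Ax} = \mo{F}_{\Ax,\Lsto}$ validates every axiom in $\Ax$. By Definition~\ref{def:can-model} this frame is upward-flat (if $\Gamma' \in U$ and $\Gamma' \subseteq \Gamma''$ then $\Gamma'' \in U$ by \eqref{it:seg-1}, and $\mc{R}$ only depends on the first component of the target). Hence Lemma~\ref{lem:corr-1} applies, and validity of each axiom reduces to the corresponding first-order frame condition on $\mo{F}_{\Ax}$.

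I would then take the axioms of $\Ax$ one at a time and invoke the relevant part of Lemma~\ref{lem:helper-1} with $\Sigma = \Lsto$. The set $\Lsto$ is trivially closed under subformulas and single negations. If $\axEm \in \Ax$, part (1) gives symmetry of $\subsetsim$, so $\axEm$ is valid by Lemma~\ref{lem:corr-1}\eqref{it:corr-em}. If $\axTb \in \Ax$, part (2) gives, for every segment, some $(\Delta,D)$ with $(\Gamma,U)\subsetsim(\Delta,D)\mc{R}(\Gamma,U)$, which is exactly the condition of Lemma~\ref{lem:corr-1}\eqref{it:corr-tb}. If $\strength \in \Ax$, part (3) yields $\mc{R} \subseteq {\subsetsim}$, matching \eqref{it:corr-str}. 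If $\pa \in \Ax$, part (4) gives the condition of \eqref{it:corr-pa}.

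The point I would check most carefully is that these conditions are established with respect to the whole set $\Ax$ rather than each axiom separately. This is fine, because each argument in Lemma~\ref{lem:helper-1} only uses that the relevant axiom is derivable in $\hlcf\oplus\Ax$, and all segments and theories are taken relative to that single $\Ax$. So the conditions hold simultaneously in one frame $\mo{F}_{\Ax}$.

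The $\pa$ case is where I expect a subtle obstacle. For part (4) we must confirm that $(\Gamma, D)$ really is a segment, which needs every theory in $D$ to contain all $\phi\sto\psi\in\Gamma$ together with \eqref{it:seg-2}. We also need $u=(\Gamma,D)$ to satisfy $u \fgeq w$ (true, since $\Gamma\subseteq\Gamma$), $s\in R[u]$ and $R[u]=D\subseteq R[v]$. This is exactly why $\Sigma=\Lsto$ is required here, since $\top\sto(\phi\sto\psi)$ need not lie in a finite $\Sigma$. Once every axiom is validated, Lemma~\ref{lem:fmp-comp}\eqref{it:comp} yields strong completeness for both flat and upward-flat frames, and together with soundness this proves the theorem.
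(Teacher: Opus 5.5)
Your proposal is correct and follows the paper's proof, which simply combines Lemma~\ref{lem:fmp-comp}\eqref{it:comp} with Lemma~\ref{lem:helper-1}, with soundness coming from Proposition~\ref{prop:soundness}. One small slip in your $\pa$ remark: $(\Gamma,D)$ is a segment because the theory $\Delta$ itself (not every theory in $D$) contains each $\phi\sto\psi\in\Gamma$, which lets \eqref{it:seg-2} transfer from $(\Delta,D)$ to $(\Gamma,D)$.
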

\begin{proof}
  Combine Lemma~\ref{lem:fmp-comp}\eqref{it:comp} and Lemma~\ref{lem:helper-1}.
\end{proof}

\begin{theorem}
  Let $\Ax \subseteq \{ \axEm, \axTb, \strength \}$.
  Then $\hlcf \oplus \Ax$ has the finite model property.
\end{theorem}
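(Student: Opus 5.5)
We apply Lemma~\ref{lem:fmp-comp}\eqref{it:fmp}. Given a finite consecution $\Gamma \seq \phi$, we take $\Sigma$ to be the smallest set containing $\Gamma \cup \{\phi, \top\}$ that is closed under subformulas and under single negations. The first step is to check that $\Sigma$ is finite. Closing a finite subformula-closed set $\Sigma_0$ under single negations adds at most one formula $\neg\psi$ for each $\psi \in \Sigma_0$ that is not already a negation. The subformulas of $\neg\psi = \psi \to \bot$ are $\neg\psi$, $\bot$ and the subformulas of $\psi$. After adding $\bot$ once, the set $\Sigma_0 \cup \{\bot\} \cup \{\neg\psi \mid \psi\in\Sigma_0\}$ is therefore subformula closed. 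It is also closed under $\sneg$: $\sneg(\neg\psi) = \psi$, and $\sneg\bot = \neg\bot$, which is already present since $\bot \in \Sigma_0 \cup\{\bot\}$. So the closure terminates after one round, and $|\Sigma| \le 2|\Sigma_0| + 2$. This is the standard bounded-closure argument and is the only genuinely new bookkeeping.

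Next, by Lemma~\ref{lem:fmp-comp}\eqref{it:fmp}, it suffices to show that $\mo{F}_{\Ax,\Sigma}$ validates $\Ax$. By construction $\mo{F}_{\Ax,\Sigma}$ is an upward-flat frame, so Lemma~\ref{lem:corr-1} reduces validity of each axiom to its frame condition. Lemma~\ref{lem:helper-1} supplies these conditions for arbitrary subformula-closed $\Sigma$, not just $\Sigma = \Lsto$. Item (1) gives symmetry of $\subsetsim$ when $\axEm \in \Ax$, using closure under single negations. Item (2) gives, for each segment $(\Gamma,U)$, a segment $(\Delta,D)$ with $(\Gamma,U)\subsetsim(\Delta,D)\mc{R}(\Gamma,U)$ when $\axTb\in\Ax$. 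This is exactly the condition of Lemma~\ref{lem:corr-1}\eqref{it:corr-tb}. Item (3) gives $\mc{R}\subseteq{\subsetsim}$ when $\strength\in\Ax$.

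The step deserving the most care is item (2) for finite $\Sigma$, since $\top\sto\phi$ for $\phi\in\Gamma$ need not lie in $\Sigma$. We have to check that $\Gamma \in U_{\Gamma,\top}$ still holds. This only requires: if $\psi\in\Sigma$ and $\Gamma\vdash_{\Ax}\top\sto\psi$, then $\psi\in\Gamma$. That follows from $\axTb$ giving $\Gamma\vdash_{\Ax}\psi$, together with deductive closure of $\Gamma$ relative to $\Sigma$. So membership of $\top\sto\psi$ in $\Sigma$ is never needed.

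Similarly, in item (3) we only use $\Gamma\vdash_{\Ax}\top\sto\phi$ for $\phi \in \Gamma \subseteq \Sigma$, and condition~\eqref{it:seg-2} applies because $\top,\phi\in\Sigma$. With these checks, $\mo{F}_{\Ax,\Sigma}$ is a finite frame validating $\Ax$ and refuting $\Gamma\seq\phi$, which gives the finite model property. We deliberately exclude $\pa$: Lemma~\ref{lem:helper-1}(4) needs $\Sigma=\Lsto$.
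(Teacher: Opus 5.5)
Your proposal is correct and follows the paper's proof. You apply Lemma~\ref{lem:fmp-comp}\eqref{it:fmp} with $\Sigma$ the closure of $\Gamma \cup \{\phi, \top\}$ under subformulas and single negations, and then invoke items (1)--(3) of Lemma~\ref{lem:helper-1}; the paper leaves implicit that these items hold for finite $\Sigma$ and that $\Sigma$ is finite, and you check both explicitly. The one slip is cosmetic: if $\bot \notin \Sigma_0$ you must also add $\neg\bot$, i.e.\ take $\neg\psi$ for all $\psi \in \Sigma_0 \cup \{\bot\}$, which your bound $2|\Sigma_0| + 2$ already allows for.
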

\begin{proof}
  Use Lemma~\ref{lem:fmp-comp}\eqref{it:fmp}, taking $\Sigma$ to be the
  closure under subformulas and under single negations of $\Gamma \cup \{ \phi \}$.
  This is finite when $\Gamma$ is finite.
  Lemma~\ref{lem:helper-1} shows that $\mo{F}_{\Ax, \Sigma}$ validates the required axiom(s).
\end{proof}

Remark \ref{rem:axmeaning} indicates that each of the axioms taken in separation and even several surprising combinations thereof (for example $\axEm \oplus \pa$) are of independent interest. In the presence of $\strength$, however, certain careless combinations may degenerate. Still, such proofs of degeneracy may also illustrate convenience of our semantics.

\begin{remark} \label{rem:degstr}
We note that $\hlcf \oplus  \axEm \oplus \strength$ is rather degenerate, reducing not only to its own $\Box$-fragment, but in fact further still to the classical propositional calculus enriched with a single constant:
One can show that $p \tto q$ is equivalent to $(p \to q) \vee \Box\bot$. While the algebraic proof is very simple, our semantics allows an even more perspicuous argument: In upward-flat frames for this system, $\fleq$ is an equivalence relation  and $R \subseteq {\fleq}$. In those clusters where $R$ is non-empty, $p \tto q$ is the same as $p \to q$, and otherwise it reduces to $\top \tto \bot$, which in such degenerate clusters is equivalent to $\top$ (and elsewhere to $\bot$).
\end{remark}

\begin{remark}
  In the logic $\hlcf \oplus \axTb \oplus \strength$ strict implication collapses to $\to$.
  To see this, note that $\strength$ already gives $(p \to q) \to (p \sto q)$.
  Combining the correspondence conditions for $\strength$ and $\axTb$
  gives: $R \subseteq {\fleq}$ and for every $w \in W$ there exists some
  $w'$ in the same $\fleq$-cluster (i.e.~$w \fleq w' \fleq w$) such that
  $R[w'] = {\uparrow}_{\fleq} w$.
  Let us verify that this entails $(p \sto q) \to (p \to q)$.
  
  Let $w$ be a world in an upward-flat model such that
  $w \Vdash p \sto q$ and let $v \fgeq w$ be a world that satisfies $p$.
  Then we can find some $v'$ in the same cluster as $v$ such that
  $R[v'] = {\uparrow}_{\fleq}v$. By assumption and intuitionistic heredity we
  then get $R[v'] \subseteq V(p)$, and since $w \fleq v'$ and $w \Vdash p \sto q$
  this implies $R[v'] \subseteq V(q)$. In particular, this gives $v \Vdash q$,
  so it follows that $w \Vdash p \to q$.
\end{remark}

\subsection{Modifying the full canonical model}\label{subsec:alt-can-mod}

  We turn our attention to extensions of $\hlcf$ with sets of axioms that
  include $\foura$. Recall that on upward-flat frames, $\foura$ corresponds
  to transitivity of the modal accessibility relation.
  The following example illustrates that we cannot use the 
  full canonical model construction from Section~\ref{sec:canonical}.

\begin{example}
  Let $\Ax = \{ \foura \}$ and consider $\Sigma = \{ \top, q \}$.
  Then we have two prime $\Sigma$-theories, $\{ \top \}$ and $\{ \top, q \}$.
  Let $\{ \Gamma \} \cup U \subseteq \{ \{ \top \}, \{ \top, q \} \}$ and suppose
  $U$ is upwards closed under inclusion.
  In order for $(\Gamma, U)$ to be an $(\Ax,\Sigma)$-segment, we need to show
  that for all $\phi, \psi \in \{ \top, q \}$,
  if $\Gamma \vdash_{\Ax} \phi \sto \psi$ and $\phi \in \Delta$ for all
  $\Delta \in U$, then $\psi \in \Delta$ for all $\Delta \in U$.
  This gives four cases, $\top \sto q$, $q \sto \top$, $q \sto q$ and $\top \sto \top$.
  The desired condition is clearly satisfied for the latter three,
  and a simple countermodel shows that $\Gamma \not\vdash_{\Ax} \top \sto q$
  for either choice of $\Gamma$.
  Therefore $(\Gamma, U)$ is an $(\Ax, \Sigma)$-segment for any choice of
  $\Gamma$ and $U$.

  In particular, this shows that for $\Gamma := \{ \top \}$ and $\Delta := \{ \top, q \}$
  we have
  $
    (\Gamma, \{ \Delta \}) \mc{R} (\Delta, \{ \Gamma, \Delta \}) \mc{R} (\Gamma, \emptyset)
  $
  while $(\Gamma, \emptyset)$ is not modally accessible from $(\Gamma, \{ \Delta \})$.
  So the modal accessibility relation $\mc{R}$ of the full canonical frame
  $\mo{F}_{\Ax,\Sigma}$ is not transitive, hence $\mo{F}_{\Ax,\Sigma}$ does not validate $\foura$.
\end{example}

  In order to prove completeness for extensions of $\hlcf$ with $\foura$,
  we used a \emph{trimmed} version of the canonical model construction
  from Section~\ref{sec:canonical}.
  This is obtained by restricting the set $\SEG_{\Ax,\Sigma}$.

\begin{definition}
  Let $\Ax$ be a consistent set of axioms and
  $\Sigma$ a set of formulas that is closed under subformulas and contains $\top$.
  We call an $(\Ax, \Sigma)$-segment $(\Gamma, U)$ \emph{pointed}
  if there exists a formula $\gamma \in \Sigma$ such that
  \begin{equation*}
    U = U_{\Gamma, \gamma} :=
        \{ \Delta \in \Th_{\Ax,\Sigma}
           \mid \text{if } \psi \in \Sigma
                \text{ and } \Gamma \vdash_{\Ax} \gamma \sto \psi
                \text{ then } \psi \in \Delta \}.
  \end{equation*}
  By Lemma~\ref{lem:helper}, every prime $(\Ax, \Sigma)$-theory can be
  extended to a pointed $(\Ax, \Sigma)$-segment.
  
  Write $\SEG^p_{\Ax,\Sigma}$ for the set of pointed $(\Ax,\Sigma)$-segments,
  and $\mo{F}_{\Ax,\Sigma}^p := (\SEG^p_{\Ax,\Sigma}, \subsetsim, \mc{R})$
  and $\mo{M}_{\Ax,\Sigma}^p := (\mo{F}_{\Ax,\Sigma}^p, V_{\Ax,\Sigma})$
  for the \emph{pointed canonical frame} and \emph{model}.
  If $\Sigma = \Lsto$ we abbreviate $\mo{F}_{\Ax}^p := \mo{F}_{\Ax, \Lsto}^p$.
\end{definition}

  Using precisely the same proof as Lemma~\ref{lem:truth}, we get

\begin{lemma}\label{lem:truth-pointed}
  Let $\Ax \subseteq \Lsto$ be a set of axioms and
  $\Sigma \subseteq \Lsto$ a set of formulas that contains $\top$ and
  is closed under subformulas. Then
  for all $\phi \in \Sigma$ and $(\Gamma, U) \in \SEG_{\Ax,\Sigma}$ we have
  $\mo{M}_{\Ax,\Sigma}, (\Gamma, U) \Vdash \phi$ iff $\phi \in \Gamma$.
\end{lemma}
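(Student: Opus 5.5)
The plan is to rerun the induction from the proof of Lemma~\ref{lem:truth}, this time inside the pointed canonical model $\mo{M}^p_{\Ax,\Sigma}$. The statement as printed mentions $\SEG_{\Ax,\Sigma}$ and $\mo{M}_{\Ax,\Sigma}$, but the intended content is clearly the pointed version: for $\phi \in \Sigma$ and $(\Gamma,U) \in \SEG^p_{\Ax,\Sigma}$, we have $\mo{M}^p_{\Ax,\Sigma},(\Gamma,U) \Vdash \phi$ iff $\phi \in \Gamma$. The full-model version is literally Lemma~\ref{lem:truth}. The induction is on $\phi$.

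The cases $\top$, $\bot$, proposition letters, $\wedge$ and $\vee$ only involve $\Gamma$, so they go through verbatim, using primeness of $\Gamma$. The positive halves of the $\to$ and $\sto$ cases are also unchanged. For $\to$ one uses deductive closure of the larger theory $\Gamma'$ together with $\Gamma \subseteq \Gamma'$. For $\sto$ one uses condition~\eqref{it:seg-2}, applied to each segment $(\Gamma',U')$ with $\Gamma \subseteq \Gamma'$, since $\Gamma' \vdash_{\Ax} \psi \sto \chi$. Neither argument uses which segments are present in the model.

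The only points needing care are the negative halves, where the earlier proof constructs witness segments; we must check that these are pointed.
\begin{itemize}
\item In the $\to$ case, we obtain a prime $(\Ax,\Sigma)$-theory $\Gamma'$ containing $\Gamma$ and $\psi$ but not $\chi$. The earlier proof extends it to $(\Gamma',U_{\Gamma',\top})$, which is pointed with $\gamma = \top \in \Sigma$ and is a segment by Lemma~\ref{lem:helper}.
\item In the $\sto$ case, with $\psi \sto \chi \notin \Gamma$, the witness $(\Gamma,U_{\Gamma,\psi})$ is pointed, since $\psi \in \Sigma$ by subformula closure. It lies $\subsetsim$-above $(\Gamma,U)$ by reflexivity. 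By Lemma~\ref{lem:helper}, every $\Delta \in U_{\Gamma,\psi}$ contains $\psi$, and some $\Delta_0 \in U_{\Gamma,\psi}$ omits $\chi$.
\end{itemize}

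The main obstacle is translating membership facts about the theories $\Delta \in U_{\Gamma,\psi}$ into facts about the $\mc{R}$-successors of $(\Gamma,U_{\Gamma,\psi})$ in the pointed model. Those successors are all pointed segments $(\Delta,D)$ with $\Delta \in U_{\Gamma,\psi}$. Every such $\Delta$ is the first component of at least one pointed segment, namely $(\Delta,U_{\Delta,\top})$. Combined with the induction hypothesis, this shows that every successor satisfies $\psi$. It also shows that the successor $(\Delta_0,U_{\Delta_0,\top})$ fails $\chi$. Hence $(\Gamma,U) \not\Vdash \psi \sto \chi$, completing the induction.
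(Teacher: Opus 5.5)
Your proposal is correct and follows the paper, which simply says the proof of Lemma~\ref{lem:truth} carries over verbatim; you are also right that the statement is meant to concern $\SEG^p_{\Ax,\Sigma}$ and $\mo{M}^p_{\Ax,\Sigma}$. One small correction: the positive half of the $\sto$ case does depend on which segments are present. To invoke~\eqref{it:seg-2} you must pass from $\mc{R}[(\Gamma',U')] \subseteq \llb\psi\rrb$ to $\psi \in \Delta$ for every $\Delta \in U'$. That step uses exactly the fact you establish at the end, namely that each such $\Delta$ is the first component of the pointed segment $(\Delta, U_{\Delta,\top})$.
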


\begin{theorem}
  Let $\Ax \subseteq \{ \axEm, \axTb, \strength, \foura \}$.
  Then $\hlcf \oplus \Ax$ is sound and strongly complete with respect to
  the class of (upward-)flat frames on which $\Ax$ is valid.
\end{theorem}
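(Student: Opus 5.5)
The plan is to run the argument of Lemma~\ref{lem:fmp-comp}\eqref{it:comp} with the pointed canonical frame $\mo{F}^p_{\Ax}$ (so $\Sigma = \Lsto$) in place of the full canonical frame $\mo{F}_{\Ax}$. Soundness is Proposition~\ref{prop:soundness}. For completeness, suppose $\Gamma \not\vdash_{\Ax} \phi$. By Lemma~\ref{lem:lindenbaum} there is a prime $\Ax$-theory $\Gamma' \supseteq \Gamma$ with $\phi \notin \Gamma'$. By Lemma~\ref{lem:helper} it extends to the pointed segment $(\Gamma', U_{\Gamma',\top})$.

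First I will check that the truth lemma holds in $\mo{M}^p_{\Ax}$, i.e.\ Lemma~\ref{lem:truth-pointed} read for pointed segments. The proof of Lemma~\ref{lem:truth} only ever produces witnesses of two forms:
\begin{itemize}
\item for the $\to$ case, a segment $(\Gamma', U_{\Gamma',\top})$;
\item for the $\sto$ case, the segment $(\Gamma, U_{\Gamma,\psi})$, and for each $\Delta \in U_{\Gamma,\psi}$ some segment with first component $\Delta$, for which we take $(\Delta, U_{\Delta,\top})$.
\end{itemize}
All of these are pointed, so the same induction goes through.

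Next I will check that $\mo{F}^p_{\Ax}$ is an upward-flat frame. Each $U_{\Gamma,\gamma}$ satisfies \eqref{it:seg-1}. Every prime theory is the first component of some pointed segment. Hence $\mc{R}[(\Gamma,U)] = \{(\Delta,D) \mid \Delta \in U\}$ is an $\subsetsim$-upset.

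It remains to verify the frame conditions, using Lemma~\ref{lem:corr-1} and Proposition~\ref{prop:fourcor}(2).
\begin{itemize}
\item For $\axEm$ and $\strength$, the proofs of Lemma~\ref{lem:helper-1}(1) and (3) are pointwise on segments, so they apply verbatim.
\item For $\axTb$, the witness $(\Gamma, U_{\Gamma,\top})$ from Lemma~\ref{lem:helper-1}(2) is already pointed.
\end{itemize}

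The new step, and the main obstacle, is transitivity of $\mc{R}$ when $\foura \in \Ax$; this is exactly what fails in the full canonical frame. Suppose
\[
(\Gamma, U_{\Gamma,\gamma}) \mathrel{\mc{R}} (\Delta, U_{\Delta,\delta}) \mathrel{\mc{R}} (\Theta, D).
\]
Then $\Delta \in U_{\Gamma,\gamma}$ and $\Theta \in U_{\Delta,\delta}$, and we must show $\Theta \in U_{\Gamma,\gamma}$. So let $\Gamma \vdash_{\Ax} \gamma \sto \psi$.
\begin{itemize}
\item By $\foura$ we have $\psi \sto (\top \sto \psi)$, so $\axTr$ gives $\Gamma \vdash_{\Ax} \gamma \sto (\top \sto \psi)$. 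Hence $\top \sto \psi \in \Delta$.
\item By $(\mathsf{N_a})$ we have $\vdash \delta \sto \top$, so $\axTr$ gives $\Delta \vdash_{\Ax} \delta \sto \psi$. Hence $\psi \in \Theta$ because $\Theta \in U_{\Delta,\delta}$.
\end{itemize}
This argument needs $\top \sto \psi \in \Sigma$, which holds since $\Sigma = \Lsto$. This is why only strong completeness, and not the finite model property, is claimed here.

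Finally, by the truth lemma $(\Gamma', U_{\Gamma',\top})$ satisfies $\Gamma$ but not $\phi$. The frame $\mo{F}^p_{\Ax}$ validates $\Ax$, so $\Gamma \not\Vdash_{\Ax} \phi$.
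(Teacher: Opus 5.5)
Your proposal is correct and follows the paper's proof. Like the paper, you pass to the pointed canonical frame $\mo{F}^p_{\Ax}$, reuse the arguments of Lemma~\ref{lem:helper-1} for $\axEm$, $\axTb$ and $\strength$, and derive transitivity of $\mc{R}$ from $\foura$ and $\axTr$. The differences are cosmetic: you make explicit the truth lemma and upward-flatness for pointed segments, which the paper leaves implicit, and you conclude $\psi \in \Theta$ via $\delta \sto \top$ and $\axTr$ rather than by applying \eqref{it:seg-2} to $\top \sto \psi \in \Delta$ directly.
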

\begin{proof}
  It suffices to show that $\mo{F}_{\Ax}^p$ validates each of the axioms in $\Ax$.
  Using the same proof as in Lemma~\ref{lem:helper-1} shows that if
  $\axEm, \axTb$ or $\strength$ is in $\Ax$, then $\mo{F}_{\Ax}^p$ validates it,
  so we are left to consider $\foura$.
  Suppose $\foura \in \Ax$. We need to show that $\mc{R}$ is transitive.
  To this end, let
  $(\Gamma, U_{\Gamma, \gamma}) \mc{R} (\Delta, U_{\Delta,\delta}) \mc{R} (\Pi, U_{\Pi,\pi})$
  in $\mo{F}_{\Ax}^p$.
  Suppose $\gamma \sto \psi \in \Gamma$.
  By $\foura$ we also have $\psi \sto (\top \sto \psi) \in \Gamma$,
  so $\axTr$ gives $\gamma \sto (\top \sto \psi) \in \Gamma$.
  This entails $\top \sto \psi \in \Delta$, which by the definition of a segment
  gives $\psi \in \Pi$. This proves that $\Pi \in U_{\Gamma, \gamma}$,
  so that $(\Gamma, U_{\Gamma, \gamma}) \mc{R} (\Pi, U_{\Pi,\pi})$,
  as desired.
\end{proof}

  Finally, using the same kind of canonical model we derive the finite model
  property for the logic $\hlcf \oplus \axTb \oplus \foura$.
  The key insight towards this is that $\top \sto \phi$ is equivalent
  to $\top \sto (\top \sto \phi)$ in this logic,
  so that it suffices to close $\Sigma$ under ``single boxes.'' 
  
\begin{lemma}\label{lem:4a-single-box}
  We have $\vdash_{\axTb,\foura} (\top \sto \phi) \leftrightarrow (\top \sto (\top \sto \phi))$.
\end{lemma}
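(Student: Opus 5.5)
We prove the two directions separately, syntactically, using only the axioms $\axTb$, $\foura$, $\axTr$, $\axKa$ and the rule $(\mathsf{N_a})$ of $\hlcf$. It is convenient first to record the derived monotonicity rule: if $\vdash \chi \to \xi$ then $\vdash (\theta \sto \chi) \to (\theta \sto \xi)$. Indeed, $(\mathsf{N_a})$ gives $\vdash \chi \sto \xi$, and $\axTr$ then yields $(\theta \sto \chi) \to (\theta \sto \xi)$.

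\emph{Left to right.} Take the instance $\phi \sto (\top \sto \phi)$ of $\foura$ and the theorem $\top \sto \phi$ as hypothesis. By $\axTr$ with $p := \top$, $q := \phi$, $r := \top \sto \phi$ we get
\begin{equation*}
  (\top \sto \phi) \wedge (\phi \sto (\top \sto \phi)) \to (\top \sto (\top \sto \phi)).
\end{equation*}
Since the second conjunct is an axiom instance, this gives $\vdash (\top \sto \phi) \to (\top \sto (\top \sto \phi))$. Only $\foura$ is needed here.

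\emph{Right to left.} Apply the instance $(\top \sto \psi) \to \psi$ of $\axTb$ with $\psi := \phi$. This gives $\vdash (\top \sto \phi) \to \phi$. By the monotonicity rule above, with $\theta := \top$, we obtain $\vdash (\top \sto (\top \sto \phi)) \to (\top \sto \phi)$. Only $\axTb$ and the base calculus are used.

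Combining both implications yields the biconditional in $\hlcf \oplus \axTb \oplus \foura$. There is no real obstacle. The only point needing care is the monotonicity rule, since $\hlcf$ has no primitive monotonicity axiom for the second argument. It is exactly what $(\mathsf{N_a})$ and $\axTr$ supply, as in the proof of Lemma~\ref{lem:helper}. As a sanity check, one can confirm soundness on upward-flat frames where $R$ is transitive and ${\fleq}\circ R$ is reflexive, using Proposition~\ref{prop:fourcor} and Lemma~\ref{lem:corr-1}.
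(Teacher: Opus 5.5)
Your proof is correct. The left-to-right direction matches the paper's argument exactly: you chain $\top \sto \phi$ with the $\foura$ instance $\phi \sto (\top \sto \phi)$ via $\axTr$.

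For right-to-left you take a slightly longer route than the paper. You instantiate $\axTb$ at $\phi$ to get $(\top \sto \phi) \to \phi$. You then lift this implication under $\top \sto (-)$, using the monotonicity rule you derive from $(\mathsf{N_a})$ and $\axTr$. The paper instead instantiates $\axTb$ directly with $p := \top \sto \phi$. This gives $(\top \sto (\top \sto \phi)) \to (\top \sto \phi)$ in a single step, so no monotonicity reasoning is needed, and the ``point needing care'' you flag does not arise.

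Your detour is sound. The monotonicity rule is correctly derived, and it is the same kind of reasoning used inside the proof of Lemma~\ref{lem:helper}. It also has the mild merit of isolating a reusable derived rule. For this lemma, though, the direct substitution instance is the more economical argument.
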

\begin{proof}
  As a substitution instance of $\axTb$ we get
  $\vdash_{\axTb,\foura} (\top \sto (\top \sto \phi)) \to (\top \sto \phi)$.
  Conversely, combining $\top \sto \phi$ with $\foura$ and $\axTr$
  yields $\top \sto (\top \sto \phi)$.
\end{proof}

\begin{definition}
  For $\phi \in \Lsto$ we define
  \begin{equation*}
    \boxtimes\phi
      := \begin{cases}
           \phi &\text{if } \phi = \top \sto \psi \text{ for some } \psi \in \Lsto \\
           \top \sto \phi &\text{otherwise}
         \end{cases}
  \end{equation*}
  A set $\Sigma \subseteq \Lsto$ is said to be \emph{closed under single boxes}
  if $\phi \in \Sigma$ implies $\boxtimes\phi \in \Sigma$.
\end{definition}

  Closing a finite set $\Sigma$ under single boxes at most doubles its size,
  hence it stays finite. This allows us to construct a finite model with
  a transitive modal relation.

\begin{theorem}
  The logic $\hlcf \oplus \axTb \oplus \foura$ has the finite model property.
\end{theorem}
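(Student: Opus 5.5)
The plan is to mimic the proof of Lemma~\ref{lem:fmp-comp}\eqref{it:fmp}, but with the pointed canonical frame $\mo{F}^p_{\Ax,\Sigma}$ in place of the full one, where $\Ax = \{\axTb, \foura\}$. Given a finite consecution $\Gamma \seq \phi$ with $\Gamma \not\vdash_{\Ax} \phi$, I take $\Sigma$ to be the least set containing $\Gamma \cup \{\phi, \top\}$ that is closed under subformulas and under single boxes.

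\textbf{Finiteness of $\Sigma$.} The subformulas of $\top \sto \psi$ are $\top$, $\psi$ and their subformulas. So alternating the two closures terminates: close under subformulas, then add $\boxtimes\chi$ for each member. This at most doubles the size, as noted before the statement. Hence $\Sigma$ is finite, and $\SEG^p_{\Ax,\Sigma}$ is finite as well.

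\textbf{Countermodel.} As in Lemma~\ref{lem:fmp-comp}, I extend $\Gamma$ to a prime $(\Ax,\Sigma)$-theory $\Gamma'$ omitting $\phi$, and then to the pointed segment $(\Gamma', U_{\Gamma',\top})$. The truth lemma on the pointed model goes through verbatim, because every witness segment used in the proof of Lemma~\ref{lem:truth} can be chosen pointed:
\begin{itemize}
\item in the $\to$ case, use $U_{\Gamma',\top}$;
\item in the $\sto$ case, use $U_{\Gamma,\psi}$;
\item extensions of successor theories are again taken as $U_{\Delta,\top}$.
\end{itemize}
This yields a finite model refuting $\Gamma \seq \phi$. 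It remains to show that $\mo{F}^p_{\Ax,\Sigma}$ validates $\axTb$ and $\foura$. For this I use the upward-flat correspondents from Lemma~\ref{lem:corr-1} and Proposition~\ref{prop:fourcor}. Upward-flatness holds by~\eqref{it:seg-1}.

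\textbf{$\axTb$.} This is as in Lemma~\ref{lem:helper-1}\eqref{it:axTb-canon}, and it works here because $(\Gamma, U_{\Gamma,\top})$ is pointed. If $\Gamma \vdash_{\Ax} \top \sto \psi$ with $\psi \in \Sigma$, then $\axTb$ gives $\psi \in \Gamma$. Hence $\Gamma \in U_{\Gamma,\top}$, so $(\Gamma,U) \subsetsim (\Gamma, U_{\Gamma,\top}) \mc{R} (\Gamma,U)$.

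\textbf{Transitivity for $\foura$ (the main obstacle).} Let
\[
(\Gamma, U_{\Gamma,\gamma}) \mc{R} (\Delta, U_{\Delta,\delta}) \mc{R} (\Pi, U_{\Pi,\pi}),
\]
and let $\psi \in \Sigma$ with $\Gamma \vdash_{\Ax} \gamma \sto \psi$. In the proof of the previous theorem one derives $\gamma \sto (\top \sto \psi)$. Here, however, $\top \sto \psi$ need not lie in $\Sigma$. So instead I use $\boxtimes\psi \in \Sigma$ and first check that $\vdash_{\Ax} \psi \sto \boxtimes\psi$ and $\vdash_{\Ax} \boxtimes\psi \to (\top \sto \psi)$, by cases.
\begin{itemize}
\item If $\psi$ is not of the form $\top \sto \chi$, then $\boxtimes\psi = \top \sto \psi$. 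The first claim is $\foura$ and the second is trivial.
\item If $\psi = \top \sto \chi$, then $\boxtimes\psi = \psi$. The first claim follows from $(\mathsf{N_a})$. The second follows from Lemma~\ref{lem:4a-single-box}, since $\top \sto \psi$ is $\top \sto (\top \sto \chi)$.
\end{itemize}

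Now $\axTr$ gives $\Gamma \vdash_{\Ax} \gamma \sto \boxtimes\psi$, so $\boxtimes\psi \in \Delta$. Then $\Delta \vdash_{\Ax} \top \sto \psi$. Combined with $\vdash \delta \sto \top$ (by $(\mathsf{N_a})$) and $\axTr$, this yields $\Delta \vdash_{\Ax} \delta \sto \psi$. Hence $\psi \in \Pi$, since $\Pi \in U_{\Delta,\delta}$.

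Therefore $\Pi \in U_{\Gamma,\gamma}$, so $\mc{R}$ is transitive and $\foura$ is valid by Proposition~\ref{prop:fourcor}. Together with the countermodel above, this gives the finite model property.
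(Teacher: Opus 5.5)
Your proposal is correct and follows essentially the same route as the paper: the pointed canonical model over $\Sigma$ closed under subformulas and single boxes, $\axTb$ via the segment $(\Gamma, U_{\Gamma,\top})$, and transitivity of $\mc{R}$ by showing $\boxtimes\psi \in \Delta$. Your explicit case check of $\psi \sto \boxtimes\psi$ and $\boxtimes\psi \to (\top \sto \psi)$ spells out a step the paper only asserts, and as you set it up, transitivity needs only the $\foura$-direction of Lemma~\ref{lem:4a-single-box}.
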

\begin{proof}
  Let $\Gamma \seq \phi$ be a finite consecution such that
  $\Gamma \not\vdash_{\axTb, \foura} \phi$.
  Let $\Sigma$ be the set of subformulas of $\Gamma \cup \{ \top, \phi \}$
  closed under single boxes.
  Then $\Sigma$ is finite, and we can use Lemmas~\ref{lem:lindenbaum}
  and~\ref{lem:sigma-pt} to extend $\Gamma$ to a prime $(\Ax,\Sigma)$-theory
  $\Gamma'$ containing $\Gamma$ but not $\phi$.
  Lemma~\ref{lem:helper} then yields an $(\Ax, \Sigma)$-segment
  $(\Gamma', U_{\Gamma',\top})$ which by Lemma~\ref{lem:truth-pointed},
  under the canonical valuation, invalidates $\Gamma \seq \phi$.
  Therefore $\mo{F}_{\Ax, \Sigma}^p = (\SEG_{\Ax,\Sigma}^p, \subsetsim, \mc{R})$
  invalidates $\Gamma \seq \phi$.
  To establish the finite model property, we now argue that
  $\mo{F}_{\Ax,\Sigma}$ validates $\axTb$ and~$\foura$.

  Using the same proof as Lemma~\ref{lem:helper-1}\eqref{it:axTb-canon}
  shows that $\mo{F}$ validates $\axTb$.
  For $\foura$, let $(\Gamma, U_{\Gamma, \gamma})$, $(\Delta, U_{\Delta, \delta})$
  and $(\Pi, U_{\Pi, \pi})$ be three $(\Ax,\Sigma)$-segments and suppose
  $(\Gamma, U_{\Gamma, \gamma}) \mc{R} (\Delta, U_{\Delta, \delta}) \mc{R} (\Pi, U_{\Pi, \pi})$.
  Let $\gamma, \psi \in \Sigma$ and suppose $\Gamma \vdash_{\Ax} \gamma \sto \psi$.
  By assumption we have $\Gamma \vdash_{\Ax} \psi \sto (\top \sto \psi)$,
  hence by $\axTr$ we find $\Gamma \vdash_{\Ax} \gamma \sto (\top \sto \psi)$.
  This entails $\Gamma \vdash_{\Ax} \gamma \sto \boxtimes \psi$,
  and since $\psi \in \Sigma$ we have $\boxtimes\psi \in \Sigma$.
  Therefore we must have $\boxtimes\psi \in \Delta$,
  hence $\Delta \vdash_{\Ax} \top \sto \psi$.
  Finally, the definition of a segment and the fact that
  $(\Delta, U_{\Delta,\delta}) \mc{R} (\Pi, U_{\Pi, \pi})$
  entails $\psi \in \Pi$.
  Thus, we have shown that for any $\psi \in \Sigma$,
  $\Gamma \vdash_{\Ax} \gamma \sto \psi$ implies $\psi \in \Pi$,
  so that $\Pi \in U_{\Gamma,\gamma}$ hence $(\Gamma, U_{\Gamma,\gamma}) \mc{R} (\Pi, U_{\Pi,\pi})$. Therefore $\mc{R}$ is transitive, so $\mo{F}_{\Ax,\Sigma}^p \Vdash \foura$.
\end{proof}

\section{Open subframes and extension stability} \label{sec:exsta}

Litak and Visser \cite{litvis24} note a direct connection between the syntactic notion of \emph{extension stability}, motivated by arithmetical interpretations of $\tto$, and a special type of nuclei on flat algebras, more specifically \emph{open nuclei} \cite{FourmanS79,Macnab81}. Recall that nuclei provide an algebraic perspective on \emph{subframes} in modal logic  \cite{Fine85:jsl,Wolter1993,BezhanishviliG07:apal}. In particular, quotienting an algebra by an open nucleus generated by a chosen element $a$ produces an algebra (isomorphic to one) whose Heyting reduct is (isomorphic to) the ideal of elements below $a$, with suitably restricted $\tto$.  In the classical setting with a unary box, applying this construction to dual algebras of Kripke frames produces the dual algebra of the (not necessarily modally generated!) subframe induced by $a$; that is, a Kripke frame whose carrier and modal accessibility relation are restricted to $a$.
In the Heyting setting, the fact that $a$ is an element of the upset algebra means that the carrier set of the corresponding subframe is $\fleq$-generated, i.e.~an upset. When it comes to $R$, Proposition \ref{prop:upsame} indicates a certain subtlety: unlike the classical case, the dual algebras of our frames might fail to notice the presence/absence of certain $R$-edges. Let us reconsider the example of $\foura$ from Proposition~\ref{prop:fourcor}: the corresponding class of arbitrary flat frames
 does not appear closed with respect to the open subframe construction. However, over upward-flat frames, the situation changes: transitivity is well-known to be persistent with respect to subframes. Together with difficulties in presenting duality for flat subframes noted above (Remarks \ref{rem:dualityhard} and \ref{rem:nogmt}), this means that some care is needed. Given the space constraints of the present paper, we do not attempt a full discussion here. Nevertheless,  it is illustrative to provide a semantic discussion of the failure  of extension stability for $\hlcs$.

\begin{example} \label{ex:nostadi}
  Consider the flat model $(W, \fleq, R)$ where $W = \{ w, v, u, z \}$,
  the intuitionistic accessibility relation $\fleq$ is the reflexive closure
  of the four worlds together with $z \fleq v$ and $z \fleq u$, and $R$ is given by $wRv$, $wRz$ and $wRu$:
  \begin{equation*}
    \begin{tikzpicture}[yscale=.6]
        \node (w) at (0,0) {$w$};
        \node (v) at (3,1) {$v$};
        \node (u) at (3,-1) {$u$};
        \node (z) at (1.5,0) {$z$};
        \draw[-Circle, bend left=15] (w) to (v);
        \draw[-Circle, bend right=15] (w) to (u);
        \draw[-Circle] (w) to (z);
        \draw[-latex, bend left=10] (z) to (u);
        \draw[-latex, bend right=10] (z) to (v);
    \end{tikzpicture}
  \end{equation*}
This frame is clearly upward-flat. Moreover, it satisfies the sufficient condition of Lemma \ref{lem:dicom} to validate $\axDi$. However, the open subframe obtained by removing $z$ is precisely the one used in Example \ref{ex:nodi} to illustrate the failure of $\axDi$. 
\end{example}

  In order to turn this counterexample into a formal proof, let us recall the
  syntactic characterisation of extension stability \cite{litvis24}.
  Given a formula $\phi$ and  a fresh propositional variable $p$, define the
  translation $\stex{p}{\phi}$ inductively as   
  commuting with the propositional variables
  and the connectives of \ipc, with the $\tto$ clause being
\begin{itemize}
  \item $\stex{p}{(\psi \tto \chi)} := ((p\to \stex{p}{\psi}) \tto (p\to \stex{p}{\chi}))$.
\end{itemize}
As $\opr\phi$ is $\top \tto \phi$, we get $\iam \vdash \stex{p}{(\opr\phi)}$ iff $\iam \vdash \opr(p\to \stex{p}{\phi})$.  Note that for any logic $\Lambda$ and any $\phi$, if $\Lambda \vdash \stex p\phi$, then $\Lambda \vdash \phi$. A logic $\Lambda$ is \emph{extension stable} if, whenever $\Lambda \vdash \phi$ and $p$ does not appear in $\phi$, we have
$\Lambda \vdash p \to \stex{p}{\phi}$.

\begin{theorem}
  The frame from Example \ref{ex:nostadi} refutes  $s \to \stex{s}{\axDi}$, i.e.
  \begin{equation*}
    s \to (((s \to p) \sto (s \to r)) \wedge ((s \to q) \sto (s \to r))
      \to ((s \to (p \vee q)) \sto (s \to r))).
  \end{equation*}
  Thus, $\hlcs$ is not extension stable, and neither is any of its extensions
  validated by this frame.
\end{theorem}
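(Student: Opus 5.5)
I would choose a valuation on the frame of Example~\ref{ex:nostadi} that makes $s$ true exactly on the open subframe $\{w,v,u\}$. This simulates the counterexample of Example~\ref{ex:nodi} inside the relativised formula. Concretely, set $V(s) = \{w, v, u\}$, which is an upset because $z$ is the only world below other worlds and it is excluded. Set $V(p) = \{v\}$, $V(q) = \{u\}$ and $V(r) = \emptyset$; all of these are upsets since $v$ and $u$ are maximal. The world at which the formula fails will be $w$. Note that $w \Vdash s$, and the only $\fgeq$-successor of $w$ is $w$ itself, so the quantifiers over $w' \fgeq w$ in the clauses for $\to$ and $\sto$ reduce to $w$. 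Hence it suffices to show that $w$ satisfies both conjuncts of the antecedent but not the consequent.

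First I would compute the truth sets of the relativised subformulas. Since $z$ is the only world outside $\llb s\rrb$ and $z$ sees $v,u$ above it, $z \Vdash s \to \chi$ iff both $v$ and $u$ satisfy $\chi$. On $v,u,w$ (no proper successors except trivially), $s \to \chi$ holds iff $\chi$ holds there. This gives:
\begin{itemize}
  \item $\llb s \to p\rrb = \{v\}$, since $z$ fails it because $u \not\Vdash p$, and $w \not\Vdash p$;
  \item $\llb s \to q\rrb = \{u\}$;
  \item $\llb s \to (p\vee q)\rrb = \{v,u,z\}$;
  \item $\llb s \to r\rrb = \emptyset$.
\end{itemize}
These are exactly the truth sets of $p$, $q$, $p\vee q$, $r$ in Example~\ref{ex:nodi}, except that $z$ now satisfies the disjunction while satisfying neither disjunct.

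Next comes the key step, evaluating the three strict implications at $w$, where $R[w] = \{v,z,u\}$. The set $R[w]$ is not contained in $\{v\}$ or in $\{u\}$, so $(s\to p)\sto(s\to r)$ and $(s\to q)\sto(s\to r)$ hold vacuously at $w$. However, $R[w] \subseteq \{v,u,z\}$ while $R[w] \not\subseteq \emptyset$, so $(s\to(p\vee q))\sto(s\to r)$ fails at $w$. Therefore $w \not\Vdash s \to \stex{s}{\axDi}$. The main point to double-check is the computation at $z$, where the relativisation turns $z$ from a witness of directedness into a world satisfying the disjunction without either disjunct.

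Finally, I would assemble the conclusion. The frame is pointwise downward directed: $R[w]$ has least element $z$, and $R[v]$, $R[u]$, $R[z]$ are empty. So by Lemma~\ref{lem:dicom} it validates $\axDi$, and by Proposition~\ref{prop:soundness} it validates all of $\hlcs$. Now suppose $\Lambda$ is $\hlcs$ or any extension validated by this frame. Then $\Lambda \vdash \axDi$, and $s$ does not occur in $\axDi$. Extension stability would give $\Lambda \vdash s \to \stex{s}{\axDi}$, which by soundness would be valid on the frame, contradicting the countermodel above. Hence $\Lambda$ is not extension stable.
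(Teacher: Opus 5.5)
Your proposal is correct and follows the paper's proof. It uses the same valuation: $V(s)$ is the complement of $z$, with $V(p)=\{v\}$, $V(q)=\{u\}$ and $V(r)=\emptyset$. This relativises Example~\ref{ex:nodi} to the open subframe $\{w,v,u\}$ and refutes the formula at $w$. You additionally spell out the truth-set computations that the paper leaves implicit, notably that $z$ satisfies $s \to (p \vee q)$ but neither $s \to p$ nor $s \to q$. You also make explicit the ``thus'' argument via Lemma~\ref{lem:dicom} and soundness.
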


\begin{proof}
Define $V(s)$ to be the complement of $z$ and follow Example \ref{ex:nodi} for other atoms, i.e., $V(p) = \{ v \}$, $V(q) = \{ u \}$ and $V(r) = \emptyset$. One can then follow the reasoning from Example \ref{ex:nodi}, with $s$ in the antecedent used to relativize reasoning to the three-state open subframe. 
\end{proof}

For contrast, consider $\foura$. One can easily see that $\stex{s}{\foura}$ is equivalent to a substitution instance of $\foura$ itself, and hence $s \to \stex{s}{\foura}$ is a theorem of $\hlcf \oplus \foura$. This shows that the closure of the corresponding upward-flat frames under open subframes is more important than the apparent failure of such closure in the broader class. In other words, narrowing down the class of frames might be essential for giving an appropriate duality account.

\section{Conclusions and future work}

We believe we have demonstrated the potential of the flat semantics for $\hlcf$.
Future work needs to include general completeness and finite model property
results (potentially also in the context of classical subsystems of various
interpretability logics), a more systematic treatment of duality, and the open
subframe construction, possibly generalising the subframe completeness result
of Fine \cite{Fine85:jsl}.
A tantalising perspective is to use the present semantics to study combinations
of intuitionistic $\tto$ with $\Diamond$, especially on frames failing
upward-flatness.

\bibliographystyle{eptcs}
\bibliography{hlcflat}

\end{document}